\theoremstyle{plain}
\newtheorem{theorem}{Theorem}
\numberwithin{theorem}{section}
\newtheorem{proposition}[theorem]{Proposition}
\newtheorem{corollary}[theorem]{Corollary}
\newtheorem{lemma}[theorem]{Lemma}
\newtheorem*{claim}{Claim}
\theoremstyle{definition}
\newtheorem*{definition*}{Definition}
\DeclarePairedDelimiter{\Bra}{\langle}{\rangle}
\DeclarePairedDelimiter{\norm}{\lVert}{\rVert}
\DeclarePairedDelimiter{\abs}{\lvert}{\rvert}
\DeclarePairedDelimiter{\set}{\lbrace}{\rbrace}
\DeclareMathOperator{\spn}{span}
\DeclareMathOperator{\divergence}{div}
\DeclareMathOperator{\grad}{grad}
\DeclareMathOperator{\mass}{\omega}
\DeclareMathOperator{\cicm}{C^\infty_c(\mathring M)}
\newcommand{\dLap}{\overline{\Delta}}
\newcommand{\dd}{\mathrm{d}}
\newcommand{\R}{\mathds{R}}
\title{A Lagrangian perspective on nonautonomous advection-diffusion processes in the low-diffusivity limit}
\author{Daniel Karrasch\thanks{karrasch@ma.tum.de} \and Nathanael Schilling\thanks{schillna@ma.tum.de}\and\\Zentrum Mathematik, Technische Universität München\\Boltzmannstr. 3, 85748 Garching bei München}
\begin{document}

\maketitle

\begin{abstract}
We study mass preserving transport of passive tracers in the low-diffusivity limit using
Lagrangian coordinates. Over finite-time intervals, the solution-operator of the nonautonomous
diffusion equation is approximated by that of a time-averaged diffusion equation. We show that
leading order asymptotics that hold for functions based on [Krol, 1991] extend to the dominant
nontrivial singular value. The generator of the time-averaged diffusion/heat semigroup is a
Laplace operator associated to a weighted manifold structure on the material manifold. We show
how geometrical properties of this weighted manifold directly lead to physical transport
quantities of the \emph{nonautonomous} equation in the low-diffusivity limit.
\end{abstract}

\emph{MSC:} 35P15, 
53B50, 
47D07, 
76R99 

\newpage
\section{Introduction}

We begin by sketching the outlines of this paper, a more comprehensive introduction with more detailed references is given in \cref{sec:math_setting}.
We are concerned with the problem of transport and mixing in nonautonomous ad\-vec\-tion-dif\-fu\-sion processes in the vanishing-diffusivity limit.
Such processes are, in the simplest case, described by the \emph{advection-diffusion equation},
\begin{align}\label{eq:adprototype}
\partial_t u_{\varepsilon}(x,t) + \divergence(u_\varepsilon(x,t)\, V(x,t)) = \varepsilon \Delta u_\varepsilon(x,t)\,,
\end{align}
where $V$ is a time-dependent, smooth velocity field,
$u_\varepsilon$ the density of a weakly diffusive passive scalar, and $\varepsilon > 0$ is referred to as the (strength of) \emph{diffusivity}.
We sometimes omit the explicit $\varepsilon$ in our notation when referring to $u_\varepsilon$ for the sake of clarity.
In this work, we are interested in the \emph{finite-time} setting,
i.e., without loss\ $t\in\mathcal I = [0,1]$. 

Lagrangian coordinates can be obtained from the advection-only version of \cref{eq:adprototype}
with $\varepsilon = 0$.
With $p$ denoting an arbitrary point in these coordinates,
it is known that \cref{eq:adprototype} takes the form of a time-dependent diffusion (or heat) equation
\begin{align}\label{eq:diffusion}
\partial_t u_{\varepsilon}(p,t) = \varepsilon \Delta_t u_{\varepsilon}(p,t)\,.
\end{align}
The smoothly varying family of operators $\left(\Delta_t\right)_{t\in\mathcal{I}}$ may be viewed as Laplace operators on a suitably defined
time-dependent family of weighted manifolds. We want to compare the solution $u_\varepsilon$ of \cref{eq:diffusion} to the solution $\overline{u}_\varepsilon$
of the \emph{time-averaged} equation
\begin{align}\label{eq:averageddiffusion}
\partial_t \overline{u}_\varepsilon(p,t) &= \varepsilon \dLap \overline u_\varepsilon(p,t)\,, & \dLap&\coloneqq\int_0^1 \Delta_t\,\dd t\,,
\end{align}
as $\varepsilon\to 0$ at the final time $t=1$. To the best of our knowledge, an averaging approach
like this has been first taken in \cite{Press1981}, albeit in an infinite-time setting.
The operator $\dLap$ has also been introduced by Froyland in his recent work on
\emph{dynamic} isoperimetry \cite{Froyland2015a}. Consistently with his work, we will refer to $\dLap$
as the \emph{dynamic Laplacian}.

In the present work, we prove two results in the spirit of averaging theory, whose precise formulation we defer to \cref{sec:averaging}.
First, for fixed initial condition $u_0$ the final density $u_\varepsilon(1,\cdot)$ (of \cref{eq:diffusion}) is uniformly approximated by $\overline{u}_\varepsilon(1,\cdot)$
(of \cref{eq:averageddiffusion}) in leading order as $\varepsilon \rightarrow 0$; see \cref{thm:averaging1}. This result follows directly from prior work by
Krol \cite{Krol1991} on the averaging method applied to time-periodic
advection-diffusion equations, in which, by the way, the transformation to \emph{standard averaging form} is what is known as the
transformation from Eulerian to Lagrangian coordinates in continuum mechanics.
Second, we show that the largest (nontrivial) singular value/vector of the time-$1$ solution operator converge in a suitable sense to the
largest (nontrivial) eigenvalue/eigenfunction of the averaged heat semigroup defined by \cref{eq:averageddiffusion}; see \cref{thm:spectralconvergence}.

In \cref{sec:ldm}, we work towards a geometric interpretation of our averaging results within the framework of the
\emph{geometry of mixing}, as introduced in \cite{Karrasch2020c}. This leads to a strengthened 
version of Froyland's dynamic Cheeger inequality \cite{Froyland2015a}; we also draw a connection
to the notion of \emph{material barriers to diffusive transport} as developed in \cite{Haller2018,Haller2020}.
A by-product of our averaging result is an alternative and simplified proof of a low-diffusivity approximation
result for the diffusive transport across boundaries of full-dimensional material subsets; see eq.~(6) of \cite{Haller2018} and \cref{cor:Hallerflux}.
Diffusive flux or material leakage has long been implicit in different approaches in finding so-called \emph{Lagrangian coherent structures (LCSs)};
see, for instance, \cite{Haller2013a,Froyland2015a,Hadjighasem2016} and \cite{Haller2015} for a general review.
It has been identified as the potentially unifying perspective on LCSs as \emph{diffusion barriers} in \cite{Karrasch2020c},
and finally became the central object in the variational approach to \emph{material barriers to diffusive transport} in \cite{Haller2018,Haller2020}.

Our main motivation stems from transport and mixing problems as studied in physical oceanography and the atmospheric sciences.
There, a typical problem is that presumably purely advective transport processes are observed only up to some finite scale.
The effect of unresolved (small) scales is then often modelled via a weak diffusion with spatiotemporal inhomogeneity; see, for instance, \cite{Sebille2018}.
To address such problems, we treat advection-diffusion processes on (compact) smooth manifolds,
and include general time-dependent, spatially inhomogeneous and anisotropic diffusion.

We would like to emphasize that we are interested in the details of spatial inhomogeneity of mixing, that
would allow to explain significant differences in the mixing ability of different flow regions (transport/diffusion barriers vs.~enhancers). This is in contrast to asymptotic or statistical information,
like decay rates to equilibrium or spatially homogeneous effective diffusion tensors, typically obtained in
homogenization theory; see, for instance, \cite{Fannjiang1994,Pavliotis2008}.

The advection-diffusion equation \eqref{eq:diffusion} has been extensively studied in the literature.
The time-periodic case has been investigated in the low-diffusivity limit by Krol \cite{Krol1991},
cf.~also \cite{Schafer2009,Vukadinovic2015}. Time-periodic advection-diffusion problems have
also been studied by Liu \& Haller \cite{Liu2004} from the Eulerian perspective.
They developed mathematical theory for observed time-periodic patterns,
\emph{strange eigenmodes}, in periodically driven advection-diffusion processes.
The time-periodic setting is closely related to this work, as one may construct a time-periodic
advection-diffusion process from the finite-time setting by appending its ajoint equation (which is again of advection-diffusion type)
and time-periodic extension.
This resulting equation is then periodic with continuous coefficients.
Such a time-periodic extension procedure has been employed recently by Froyland \emph{et al.}~\cite{Froyland2019a} to the Fokker--Planck equation associated to a stochastic differential equation, in order to find approximations to Eulerian, spatiotemporal sets with small exponential escape
rates of stochastic trajectories.

In the \emph{autonomous} case---where $V$ in \cref{eq:diffusion} does not depend on time---semi-group theory may be applied,
and many results have been obtained in this case.  For instance, Kifer \cite[Chapter III]{Kifer1988} studies
asymptotics of spectra in the low-diffusivity limit; see also \cite[Chapter 6.7]{Freidlin2012}.
Further autonomous, non-finite-time results were obtained in \cite{Berestycki2005,Constantin2008}.

\section{Diffusion-induced Lagrangian geometries}\label{sec:math_setting}

This section is meant to be both a motivation and a gentle recall of the geometric
interpretation of advection-diffusion processes, as developed in \cite{Karrasch2020c,Haller2018}.
For a recall of fundamental differential geometry concepts and notation used, we refer to \cref{sec:diffgeo}.

\subsection{Advection processes}\label{sec:advection}
We recall some basic properties of advection processes that preserve mass; see also \cite{Haller2020}.
These generalize the notion of volume-preservation to vector fields whose flows do not preserve volume; this is done by constructing a
time-dependent volume-form $\varrho$, the (fluid) \emph{mass form}, that has precisely the property that it is preserved by the flow of a time-dependent velocity field $V$. Readers who are only interested in volume-preserving flows (with respect to a volume $\mass$, such as the usual Euclidean volume) may set $\varrho(t,x) \equiv \mass(x)$ everywhere below.

Recall that, in a fixed spatial frame, the evolution of the passively advected mass-form $\varrho$, with initial value $\mass$, is given by the advection equation/conservation law
\begin{align}\label{eq:advection}
\partial_t \varrho &= -\mathcal L_V \varrho\\
\varrho(0,\cdot) &= \mass\,.
\end{align}
Here, $V$ is the sufficiently regular time-dependent fluid (bulk) velocity.
We consider \cref{eq:advection} on an orientable smooth manifold $M$, potentially with a sufficiently regular boundary, over a (finite subset of the) finite time interval $\mathcal{I}\subset\R$. For notational simplicity, we assume w.l.o.g.~that $\mathcal{I}=[0,1]$.

\Cref{eq:advection} is well-known as a hyperbolic partial differential equation (PDE) that can be solved by the \emph{method of lines/characteristics}.
That is, consider the associated ordinary differential equation (ODE)
\begin{align}\label{eq:ODE}
\dot{x} &= v(t,x), & x(0) &= x_0\in M,
\end{align}
on $\mathcal{I}$. Let $\phi$ denote the \emph{flow map} associated to \cref{eq:ODE}, i.e., $t\mapsto\phi_0^t(x_0)$ is the unique solution of \cref{eq:ODE} satisfying the initial condition $\phi_0^0(x_0) = x_0$.
The solutions of \cref{eq:ODE} are then known as the \emph{characteristics} of \cref{eq:advection}, and each characteristic is also referred to as the \emph{trajectory} of a \emph{(fluid) particle}.
Now, with the formula for time-derivatives along trajectories, \cite[Chapter V, Prop.~5.2]{Lang1995}, \cref{eq:advection} becomes
\[
\left(\tau\mapsto(\phi_0^\tau)^* \varrho \right)'(t)  =  (\phi_0^t)^*(\mathcal L_V \varrho) + (\phi_0^t)^* \partial_t \rho  = 0\,,
\]
where $(\phi_0^t)^*$ is the pullback by $\phi_0^t$. For its push-forward $(\phi_0^t)_*$ this implies
\begin{equation}
\rho(t,\cdot) = (\phi_0^t)_* \mass\,.
\end{equation}
In particular, the mass-form along a trajectory is uniquely determined by its value anywhere on the trajectory.

In addition to the mass form $\varrho$, we would like to model the evolution of a passive tracer that is advected by the fluid.
This passive tracer is described by a (time-dependent) function/density $u$ such that the volume form $u\varrho$, integrated against any (measurable) $S \subset M$,
returns the total amount of the tracer in $S$. Here,
\begin{align}\label{eq:traceradvection}
\partial_t u &= -\dd u(V)  = -\mathcal L_V u\,, & u(0,\cdot) &= u_0\,.
\end{align}
As above, along characteristics we obtain
\begin{align*}
\frac{d}{dt}\left((\phi_0^t)^* u\right) &= (\phi_0^t)^*(\mathcal L_V u ) + (\phi_0^t)^* \partial_t u = 0 \,,
\end{align*}
therefore $u$ is constant along characteristics.

One important consequence is the following intimate relation between the PDE formulation of transport, \cref{eq:advection,eq:traceradvection},
and its ODE formulation, \cref{eq:ODE}. For any (measurable) set $S\subset M$ and any (measurable) initial scalar density $u_0$ one has
\begin{equation}\label{eq:integralconservation}
\int_Su_0\, \mass = \int_{\phi_0^t(S)}u(t,\cdot)\varrho(t,\cdot)\,.
\end{equation}
Note that \cref{eq:integralconservation} contains both the densities $\varrho$ and $u$ \emph{and} the flow map $\phi$,
which otherwise do not occur simultaneously in \cref{eq:advection,eq:traceradvection,eq:ODE}.

Next, assume the scalar is confined to some set $S\subset M$, e.g., $u_0= \mathds{1}_S$. Then, as a direct consequence of \cref{eq:integralconservation}, we have
\begin{equation}\label{eq:integralcomplement_advection}
\int_{\phi_0^t(S)^{\mathrm{c}}}u(t,\cdot)\varrho(t,\cdot) = 0\,,
\end{equation}
where $A^{\mathrm{c}}$ denotes the complement of $A$ (in $M$).
In other words, none of $u$ leaks out of the spatiotemporal tube $\bigcup_{t \in [0,1]} \phi_0^t(S)$.
For later reference and in accordance with continuum mechanics, we call any flow-invariant
spatiotemporal set $S = \bigcup_{t \in [0,1]}\phi_0^t(S_0)$ a \emph{material set}.
So far, all considerations were relative to some spatial or, synonymously, some \emph{Eulerian} frame.
Besides different spatial frames, however, which can be related to different
observers of the physical transport process, there exists the \emph{Lagrangian}
frame that is related only to the characteristics/particles of the underlying process.
Changing from \emph{some} Eulerian to \emph{the} Lagrangian frame is essentially
applying the method of lines, where one additionally declares the initial conditions of
\cref{eq:ODE}, i.e., the particles, as \emph{coordinates}, and represents all physical equations w.r.t.~those.

Briefly, in Lagrangian coordinates that are co-moving with the trajectories, \cref{eq:advection} becomes
\begin{align*}
\partial_t\varrho &= 0\,, & \partial_t u &= 0\,;
\end{align*}
\cref{eq:ODE} reads as
\begin{equation}
\dot{x} = 0\,,
\end{equation}
and, as a consequence, the ``flow map'' is the identity map for all times. \Cref{eq:integralcomplement_advection} then states that
no scalar mass leaks out of any material set into the respective complementary material set;
likewise the Lagrangian advective transport through any material surface vanishes.

\subsection{Advection-diffusion processes}\label{sec:ade}

In the following, we will consider advection-diffusion processes and re-inspect our above considerations in this framework in order to summarize the construction appearing in \cite{Karrasch2020c}.

Recall that, in a fixed spatial frame, the evolution of a weakly diffusive scalar, given by its density $u$, passively advected by a (possibly compressible)
fluid described with mass form $\varrho$ is given by the \emph{advection-diffusion equation} \cite{Landau1987,Thiffeault2003}
\begin{subequations}\label{eq:ADE}
\begin{align}
\partial_t u &= -\mathcal L_V u  + \varepsilon\divergence_\varrho (D\, \dd u)\,,\label{eq:diffscalar}\\
\partial_t\varrho &= -\mathcal L_V \varrho \,.\label{eq:fluidmass}
\end{align}
\end{subequations}
Here, $\varepsilon > 0$ is the \emph{diffusivity} (or the inverse Péclet number in non-di\-men\-sional\-ized units), which is assumed to be small, and $D\colon T^*M\to TM$ is a (possibly time-dependent) bundle morphism satisfying the following property: for given $(t,x)\in\mathcal{I}\times M$ identify $D$ with a bilinear form $\tilde{g}_t^{-1}$ on $T^*M$, then this bilinear form is symmetric and positive-definite. In particular, $D$ gives rise to a dual metric, inducing a Riemannian metric $\tilde{g}_t$ on $M$. Viewed in this sense, $D$ is a \emph{diffusion tensor field}, modeling possibly (spatially and temporally) inhomogeneous, anisotropic diffusion; for further details on the aforementioned steps, see also \cite{Karrasch2020c}.
It is also necessary to impose suitable boundary conditions in the case that the manifold $M$ has nonempty boundary $\partial M$.
We will focus on homogeneous boundary conditions of Dirichlet--, and for only some of our results, Neumann form.

Taking a closer look at \cref{eq:diffscalar}, we directly recognize
$D\,\dd u$ as the gradient of $u$ w.r.t.~the metric $\tilde{g}$. As a consequence, the diffusion term can then be elegantly represented via the Laplace operators on the family of weighted manifolds $(M,\tilde{g}_t,\theta)$,
\begin{subequations}\label{eq:ADELaplace}
\begin{align}
\partial_tu &= -\mathcal L_V u  + \varepsilon\Delta_{\tilde{\theta}_t,\tilde{g}_t}u\,,\label{eq:diffscalar2}\\
\partial_t\varrho &= -\mathcal L_V \varrho \,,\label{eq:fluidmass2}
\end{align}
\end{subequations}
where $\tilde{\theta}_t$ is the density of $\varrho$ w.r.t.~$\dd\tilde{g}_t$, i.e., $\tilde{\theta}_t\,\dd\tilde g_t =\varrho(t,\cdot)$.

In stark contrast to the advection equations, \cref{eq:advection,eq:fluidmass,eq:fluidmass2}, the advection-diffusion equation,
\cref{eq:diffscalar2}, is not amenable to the method of characteristics, and, therefore does \emph{not} introduce a concept of
deterministic particles, trajectories, or Lagrangian coordinates for the scalar $u$.
On the other hand, it is a singular perturbation of a hyperbolic PDE: namely \cref{eq:diffscalar2} with $\varepsilon=0$ as considered before.
Hence, we may introduce Lagrangian coordinates based on the characteristics of its singular limit (cf.~\cite{Thiffeault2003}), or, equivalently, based on \cref{eq:fluidmass2}.
In these Lagrangian coordinates, the advective terms in \cref{eq:ADELaplace} vanish as in \cref{sec:advection}, and we obtain from the well-known
pullback transformation rules
\begin{equation}
\partial_t u = \varepsilon\Delta_{\theta_t,g_t} u = \varepsilon\divergence_{\mass}(g_t^{-1}\,\dd u)\,,\label{eq:preLagrangianADE}
\end{equation}
which is an evolution equation on the material manifold $M$. Here, $g_t\coloneqq (\phi_0^t)^*\tilde{g}_t$ is the diffusion-adapted
pullback metric on $M$ and $\theta_t = (\phi_0^t)^* \theta$. As a consequence of mass preservation, the volume form
$\mass$---w.r.t.~which we compute the divergence---does not depend on time. Henceforth, we write $\divergence$ without
a subscript whenever we refer to $\divergence_{\mass}$. Moreover, let $\Delta_t \coloneqq \divergence (g_t^{-1} \dd u)$, then
with this notation \cref{eq:preLagrangianADE} simplifies to
\begin{equation}
\partial_t u = \varepsilon\Delta_t u\,.\label{eq:LagrangianADE}
\end{equation}

The lack of characteristics for the advection-diffusion equation has another, crucial consequence:
given a (proper) material subset $S\subset M$, the amount of $u$ is in general no longer constant over time, or, equivalently
\begin{equation}\label{eq:integralcomplement}
T_0^t(S,u_0) \coloneqq \int_{S} u_0\,\mass -  \int_{\phi_0^t(S)}u(t,\cdot)\,\varrho(t,\cdot)  \neq 0\,.
\end{equation}
In simple words, there is leakage of $u$ out of or into material sets. For given scalar fields and material subsets, the associated
scalar leakage is an non-trivial and interesting quantity when regarded as a function of material sets $S$, see \cite{Haller2018,Haller2020}.

In Lagrangian coordinates, \cref{eq:integralcomplement} reads as
\begin{equation}\label{eq:integralcomplement2}
T_0^t(S,u_0) = \int_{S}\left(u_0 - u(t,\cdot)\right)\,\mass \,.
\end{equation}
Furthermore, assuming for the moment that all involved functions are sufficiently smooth, differentiating with respect to $t$ and applying the fundamental theorem of calculus yields
\begin{equation}\label{eq:time_t_flux}
T_0^t(S,u_0) = -\varepsilon\int_0^t\left( \int_{S} \Delta_\tau u(\tau,\cdot)\, \mass\right) \dd\tau\, .
\end{equation}
Heuristically, for very small $\varepsilon$ we have $u \approx u_0$ which suggests that
\begin{equation}\label{eq:heuristicallyspeaking}
\frac{T_0^t(S,u_0)}{\varepsilon} \approx -\int_0^t \int_{S}\Delta_\tau u_0 \, \mass\, \dd\tau\, \eqqcolon \overline T_0^t(S,u_0)\, .
\end{equation}
Indeed, it was shown in \cite{Haller2018,Haller2020} for the case that $M \subset \R^n$ and homogeneous Neumann boundary condition  that
\begin{equation}\label{eq:leadingordertransport}
T_0^t(S,u_0) = \varepsilon \overline T_0^t(S,u_0) + o(\varepsilon),\quad \varepsilon \to 0\,. 
\end{equation}
In \cref{sec:averaging}, we develop an alternative proof of \cref{eq:leadingordertransport} on compact manifolds with Dirichlet boundary; see \cref{cor:eq6}.

By Fubini's theorem, we have that
\[
\overline T_0^t(S,u_0) = -\int_{S}\int_0^t \Delta_\tau u_0\, \dd\tau\,\mass = -\int_{S}\int_0^t \Delta_\tau \,\dd\tau\, u_0\mass\,.
\]
For $t=1$, this suggests the definition
\begin{equation}\label{eq:dynamic_laplace}
\overline \Delta \coloneqq  \int_0^1 \Delta_t\,\dd t\,.
\end{equation}
This operator was recently introduced in \cite{Froyland2015a,Froyland2017} and coined \emph{dynamic Laplacian}.
With this notation, \cref{eq:leadingordertransport} reads as
\begin{equation*}
T_0^1(S,u_0) = -\varepsilon \int_{S} \overline \Delta u_0 \, \mass + o(\varepsilon)\,,
\end{equation*}
and combines mathematical tools from recent work on material surfaces that extremize diffusive flux \cite{Haller2018,Haller2020} on the one hand,
and dynamic isoperimetry \cite{Froyland2015a,Froyland2017} on the other hand.
A goal of this work is to investigate these connections rigorously; see \cref{cor:Hallerflux}.

\subsection{The geometry of mixing and diffusive permeability}\label{sec:geometryofmixing}

Our study is centered around the \emph{geometry of mixing} as induced by
$\dLap$ and introduced in \cite{Karrasch2020c}. There, it was observed
that $\dLap$ is the Laplace operator of a specific \emph{weighted}
(Riemannian) manifold. With the above notation, let us define
\begin{equation}
\overline g = \left(\int_{\mathcal I}g_t^{-1} \dd t \right)^{-1}\,.
\end{equation}

\begin{lemma}[{\cite[Prop.~3]{Karrasch2020c}}]
The dynamic Laplacian $\overline \Delta$ is the Laplace operator associated to
the weighted manifold $(M,\overline g, \theta)$, where $\theta\,\dd\overline g = \mass$.
\end{lemma}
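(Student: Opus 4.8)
The plan is to unwind the definition of the weighted Laplace operator and then commute the divergence with the time-integration. Recall that, for a measure (volume form) $\mu$ on $M$, the associated divergence $\divergence_\mu$ of a vector field $X$ is characterised intrinsically by $\mathcal L_X\mu=(\divergence_\mu X)\,\mu$; crucially, it depends only on $\mu$ and not on any auxiliary metric. The Laplace operator of the weighted manifold $(M,\overline g,\theta)$ is then $u\mapsto\divergence_{\theta\,\dd\overline g}(\grad_{\overline g}u)$, the gradient being taken with respect to $\overline g$. The first thing I would record is that, by the defining relation $\theta\,\dd\overline g=\mass$, the divergence piece is exactly the paper's $\divergence$:
\[
\divergence_{\theta\,\dd\overline g}=\divergence_{\mass}=\divergence.
\]
So the claim reduces to the operator identity $\divergence(\grad_{\overline g}u)=\dLap u$.

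Next I would bring in the definition of $\overline g$. As a bundle morphism $T^*M\to TM$, the inverse metric is $\overline g^{-1}=\int_{\mathcal I}g_t^{-1}\,\dd t$, directly from $\overline g=\bigl(\int_{\mathcal I}g_t^{-1}\,\dd t\bigr)^{-1}$. Applying this to the (time-independent) covector $\dd u$ and pulling it inside the integral gives
\[
\grad_{\overline g}u=\overline g^{-1}\,\dd u=\Bigl(\int_{\mathcal I}g_t^{-1}\,\dd t\Bigr)\dd u=\int_{\mathcal I}g_t^{-1}\,\dd u\,\dd t .
\]
It then remains to apply $\divergence$ and interchange it with the integral over $\mathcal I$. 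Since $\divergence=\divergence_{\mass}$ is a fixed, $t$-independent, linear first-order differential operator acting in the spatial variable, this yields
\[
\divergence(\grad_{\overline g}u)=\divergence\!\Bigl(\int_{\mathcal I}g_t^{-1}\,\dd u\,\dd t\Bigr)=\int_{\mathcal I}\divergence(g_t^{-1}\,\dd u)\,\dd t=\int_{\mathcal I}\Delta_t u\,\dd t=\dLap u,
\]
which is precisely the asserted identity.

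The computation is a one-line manipulation once the definitions are unpacked, so the genuine content is bookkeeping. Two points deserve care. First, one must verify that the weighted divergence is truly metric-independent and therefore equals $\divergence_{\mass}$; this is where the hypothesis $\theta\,\dd\overline g=\mass$ enters essentially, and it is what makes the final answer involve $\mass$ rather than the Riemannian volume of $\overline g$. Second, the interchange of $\divergence$ with $\int_{\mathcal I}\cdot\,\dd t$ must be justified on the relevant function space (say $u\in\cicm$): on a compact manifold, joint smoothness of $(t,x)\mapsto g_t^{-1}$ and uniform bounds on spatial derivatives make differentiation under the integral sign routine. Implicit here is also the well-definedness of $\overline g$ itself: the fibrewise integral $\int_{\mathcal I}g_t^{-1}\,\dd t$ of symmetric positive-definite forms is again symmetric positive-definite, hence invertible, so $\overline g$ is a bona fide Riemannian metric and its Laplacian is defined. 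I expect this last regularity check to be the only mildly technical step.
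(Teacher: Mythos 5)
Your proof is correct and is exactly the intended argument: since $\theta\,\dd\overline g=\mass$, the weighted divergence $\divergence_{\theta\,\dd\overline g}$ collapses to $\divergence_{\mass}$, and linearity of the divergence lets the time integral pass through, which is the same unwinding-of-definitions computation as in the cited source \cite[Prop.~3]{Karrasch2020c} (the paper itself states the lemma by citation without reproducing a proof). Your two flagged technicalities---positive-definiteness of the fibrewise integral $\int_{\mathcal I}g_t^{-1}\,\dd t$ making $\overline g$ a genuine metric, and differentiation under the integral sign on the compact $M$---are indeed the only points needing care, and you handle both.
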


As in \cite{Karrasch2020c}, we refer to the material manifold $M$, equipped with the metric $\overline g$ and density $\theta$ as the \emph{geometry of mixing}.
This, together with \cref{sec:diffgeo}, shows that the geometry of mixing is constructed to have the following elegant properties:
(i) volume/fluid mass is given by $\mass$, the differential form preserved by the flow, and
(ii) diffusion is given by averaged pullback diffusion tensors as featured in the dynamic Laplacian.
It was further observed in \cite{Karrasch2020c} that
\begin{align}\label{eq:averagedLADE}
\partial_t \overline u &= \varepsilon \overline \Delta \overline u, & \overline u(0,\cdot) &= u_0\,,
\end{align}
is an \emph{averaged} (cf.~\cite{Sanders2007,Pavliotis2008}) form of \cref{eq:LagrangianADE}.
It was conjectured that \cref{eq:averagedLADE} approximates \cref{eq:LagrangianADE} in the vanishing diffusivity limit, leaving open the concrete nature of the approximation and the required assumptions.
We prove this in \cref{sec:averaging} building on a similar result in the classic, i.e., time-periodic, averaging context \cite{Krol1991}.
A by-product of our averaging result is a new proof of \cref{eq:leadingordertransport}, as mentioned above.
We also prove that the convergence extends to singular values/vectors, addressing an open question from \cite{Karrasch2020c}.

To summarize the previous sections: \cref{eq:leadingordertransport} shows that in leading order as $\varepsilon \rightarrow 0$, the diffusive transport out of a material set is determined by
\[
\overline T_0^1(S,u_0) = -\int_S \overline \Delta u_0 \,\mass\,.
\]
By the divergence theorem, we have
\begin{equation}\label{eq:smoothareasurfacetransport}
\overline T_0^1(S,u_0) = -\int_{\partial S} \dd u_0(\overline \nu)\, \dd \overline A \,,
\end{equation}
where $\overline \nu$ is the outward-pointing $\overline g$-unit normal vector field on $\partial S$ and
$\dd \overline A = \theta \dd A_{\overline g}$ is the induced area form on $\partial S$ in the geometry of mixing.

Of course, $\overline T_0^1(S,u_0)$ could be represented similarly in other weighted geometries on $M$:
choose any metric $\widetilde{g}$, compute the density $\widetilde{\theta}$ of the fluid mass relative to the induced volume $\dd g$, denote the induced area form and the $\widetilde g$-unit normal vector field on $\partial S$ by $\dd\widetilde{A}$ and $\widetilde\nu$, respectively, then one obtains analogously to \cref{eq:smoothareasurfacetransport}
\begin{equation}\label{eq:genericflux}
\overline T_0^1(S,u_0) = -\int_{\partial S} \dd u_0(H\widetilde \nu)\, \dd \widetilde A \,,
\end{equation}
where $H = \overline{g}^{-1} \widetilde{g}$ is a tangent space isomorphism; cf.~also \cref{sec:sagm} below and
references \cite{Haller2018,Haller2020}, in which $\overline T_0^1(S,u_0)$ is represented in the usual
Euclidean/physical geometry, and $H$ is coined the \emph{transport tensor} (denoted by
$\mathbf{T}_{t_0}^{t_1}$ there). It is exactly the absence of any additional tensor in
\cref{eq:smoothareasurfacetransport} that makes arguably the geometry of mixing the ``best-adapted''
or ``most natural'' geometry in which to look at leading-order diffusive flux in Lagrangian coordinates.

\Cref{eq:smoothareasurfacetransport} emphasizes that, in leading order, the
diffusive transport $T_0^1(S,u_0)$ out of a material set $S$ depends
on
(i) the differential/gradient of the initial concentration $u_0$ along $\partial S$, and
(ii) properties of the geometry of mixing via the surface measure $\dd\overline A$ and unit normal vector field $\overline \nu$.
As argued in \cite{Karrasch2020c}, $\dd \overline A$ is particularly interesting as an intrinsic
measure of the ``diffusive permeability'' of the material boundary $\partial S$. In many physical applications, it is of great interest to
diagnose the mixing structure of an advection-dominated transport process \emph{independent} of any specific scalar quantity; cf.~the discussion in \cite{Haller2018}.

\section{Finite-time averaging of the advection-dif\-fu\-sion equation}\label{sec:averaging}

We now show that in the setting of the advection-diffusion equation, the diffusion process induced by the dynamic Laplacian
approximates the diffusion of the advection-diffusion equation in Lagrangian coordinates, in the limit of vanishing diffusivity.

In this section, we restrict to those $M$ that are compact manifolds whose boundary, if it is nonempty, is smooth.
The proof can be extended to other classes of manifolds also, provided there is a suitable maximum principle.

\subsection{Uniform convergence}\label{sec:averaging_pointwise}

Let $u_\varepsilon\colon M \times [0,1] \rightarrow \R$ solve the advection-diffusion equation in Lagrangian coordinates for diffusivity $\varepsilon$
with initial condition $u_0\colon M \rightarrow \R$ with---if there is a boundary---homogeneous Dirichlet or Neumann\footnote{Given a metric for each time $t \in [0,1]$, we also have a $g_t$ unit-normal vector $\nu_t$ field on $\partial M$ for each $t \in [0,1]$. The natural homogeneous Neumann condition is thus $\mathrm{d}u_\varepsilon(t,\cdot)(\nu_t) = 0$ for each $t \in [0,1]$ on $\partial M$.} boundary conditions.
Thus, in the interior of $M$, $u_\varepsilon$ satisfies
\begin{align}\label{mlade}
\partial_t u_\varepsilon &= \varepsilon \Delta_t u_\varepsilon\,.
\end{align}
Similarly, let $\overline u_\varepsilon\colon M \times [0,1] \rightarrow \R$ be the solution of the heat flow generated by the dynamic Laplacian $\dLap$,
with initial condition $u_0$ and diffusivity $\varepsilon$, i.e.,
\begin{align}\label{mlavade}
\partial_t \overline u_\varepsilon &= \varepsilon \dLap \overline u_\varepsilon
\end{align}
and---if there is a boundary---homogeneous Dirichlet or Neumann\footnote{Here, we require $\mathrm d\overline u_\varepsilon(t,\cdot)(\overline \nu) = 0$ independent of $t$, where $\overline \nu$ is unit normal field for $\overline g$ on $\partial M$.} boundary conditions of the corresponding  type.
We will focus mainly on the case of Dirichlet boundary, and refer to  \cref{sec:parabolicpdes} for a
recall of results regarding existence, uniqueness and regularity of solutions. We expect analogous existence, uniqueness and regularity
results to hold in the Neumann case on manifolds, but could not find a reference.

\begin{definition*}
Depending on the boundary condition used, we call an initial value $u_0 \in C^\infty(M)$ \emph{admissible} if
(i) $u_0$ is compactly supported in the interior of $M$ (Dirichlet case),
(ii) if $u_0$ is constant in a neighborhood of the spatial boundary $\partial M$ (Neumann case).
\end{definition*}

This definition is motivated by the fact that the time-dependent parabolic \cref{mlade,mlavade} may not be smooth at $t=0$ if the initial value $u_0$
does not satisfy certain compatibility conditions at the boundary; see \cite[Sect.~7.1, Thm.~6]{Evans2010}; cf.~also \cite{Krol1991}.
These may differ between \cref{mlade} and \cref{mlavade}. Our definition of
admissibility guarantees that the compatibility conditions of both the time-dependent and the averaged equations are satisfied simultaneously.

\begin{proposition}\label{thm:averaging1}
With $u_\varepsilon$ and $\overline{u}_\varepsilon$ as above, let $u_0$ be an admissible initial value.
Then
\begin{equation}\label{eq:averaging1}
u_\varepsilon(1, x) = \overline u_\varepsilon(1, x) + O(\varepsilon^2)\,, \qquad \varepsilon\to 0,
\end{equation}
uniformly in $x$.
\end{proposition}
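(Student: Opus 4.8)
The plan is to estimate the difference $w_\varepsilon \coloneqq u_\varepsilon - \overline u_\varepsilon$ by a Duhamel representation and to extract the extra power of $\varepsilon$ from the fact that $\Delta_\tau - \dLap$ has vanishing time-average. Write $S_\varepsilon(t,s)$ for the forward propagator of \cref{mlade} (so that $u_\varepsilon(t,\cdot)=S_\varepsilon(t,0)u_0$), and observe that $w_\varepsilon$ solves the inhomogeneous equation $\partial_t w_\varepsilon = \varepsilon\Delta_t w_\varepsilon + \varepsilon(\Delta_t-\dLap)\overline u_\varepsilon$ with $w_\varepsilon(0,\cdot)=0$ and the same boundary conditions. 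Duhamel's principle then gives
\[
w_\varepsilon(1,\cdot) = \varepsilon\int_0^1 S_\varepsilon(1,\tau)\,(\Delta_\tau-\dLap)\,\overline u_\varepsilon(\tau,\cdot)\,\dd\tau\,.
\]

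First I would introduce the operator-valued antiderivative $C(\tau)\coloneqq\int_0^\tau\Delta_\sigma\,\dd\sigma-\tau\dLap$, so that $\partial_\tau C(\tau)=\Delta_\tau-\dLap$ and, crucially, $C(0)=C(1)=0$ by the very definition of $\dLap$ in \cref{eq:dynamic_laplace}. Using $\partial_\tau\overline u_\varepsilon=\varepsilon\dLap\overline u_\varepsilon$, one rewrites $(\Delta_\tau-\dLap)\overline u_\varepsilon = \partial_\tau\big(C(\tau)\overline u_\varepsilon\big)-\varepsilon C(\tau)\dLap\overline u_\varepsilon$ and integrates the first term by parts against the propagator. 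The key identity is the backward equation $\partial_\tau S_\varepsilon(1,\tau)=-\varepsilon S_\varepsilon(1,\tau)\Delta_\tau$, which contributes an additional factor of $\varepsilon$; since $C(0)=C(1)=0$ and $S_\varepsilon(1,1)=\Id$, both boundary terms vanish, leaving
\[
w_\varepsilon(1,\cdot)=\varepsilon^2\int_0^1 S_\varepsilon(1,\tau)\big(\Delta_\tau C(\tau)-C(\tau)\dLap\big)\overline u_\varepsilon(\tau,\cdot)\,\dd\tau\,.
\]
Thus the claim reduces to a uniform (in $\tau\in[0,1]$ and in small $\varepsilon$) $L^\infty$-bound on the integrand.

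Two ingredients close the estimate. For the propagator I would invoke the parabolic maximum principle (available in both the Dirichlet and Neumann settings recalled in \cref{sec:parabolicpdes}) to get $\norm{S_\varepsilon(1,\tau)}_{L^\infty\to L^\infty}\le 1$. For the averaged solution I would establish uniform bounds $\norm{\overline u_\varepsilon(\tau,\cdot)}_{C^4}\le K$ with $K$ independent of $\varepsilon$ and $\tau$: since $\dLap$ is the self-adjoint, nonpositive Laplacian of the weighted manifold $(M,\overline g,\theta)$, the energy estimate $\partial_\tau\norm{\dLap^{k}\overline u_\varepsilon}_{L^2}^2\le 0$ yields $\norm{\dLap^{k}\overline u_\varepsilon(\tau,\cdot)}_{L^2}\le\norm{\dLap^{k}u_0}_{L^2}$, and elliptic regularity together with Sobolev embedding upgrade this to a uniform $C^4$ bound once $2k>4+\tfrac{1}{2}\dim M$; the admissibility of $u_0$ guarantees the compatibility conditions putting $u_0$ in the relevant domains. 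As the bracketed operator in the last display is a fourth-order operator with smooth, uniformly bounded coefficients, the integrand is $O(1)$ in $L^\infty$, the $\varepsilon^2$ prefactor yields \cref{eq:averaging1}, and uniformity in $x$ is exactly the $L^\infty$ statement.

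The main obstacle I anticipate is not the order-counting but the rigorous justification of the operator-level integration by parts in the presence of boundaries: one must check that the source terms $C(\tau)\overline u_\varepsilon$ and $\Delta_\tau C(\tau)\overline u_\varepsilon$ are genuinely admissible (continuous in $\tau$ and lying in the domain on which $S_\varepsilon(1,\tau)$ acts) and that the differentiation-of-the-propagator identity holds when applied to these time-dependent data. This is precisely where the admissibility hypothesis on $u_0$ and the matching compatibility conditions for both \cref{mlade} and \cref{mlavade} are needed; in the Dirichlet case the compact support of $u_0$ in the interior renders the $\tau=0$ endpoint harmless, while $C(1)=0$ disposes of the $\tau=1$ endpoint intrinsically.
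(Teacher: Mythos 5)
Your Duhamel-plus-summation-by-parts skeleton is sound in outline and genuinely different from the paper's argument, and on a \emph{closed} manifold it goes through: the uniform $C^4$ bound on $\overline u_\varepsilon$ via $\norm{\dLap^k\overline u_\varepsilon(\tau)}_{L^2}\le\norm{\dLap^k u_0}_{L^2}$, elliptic regularity and Sobolev embedding is fine, as is $\norm{S_\varepsilon(1,\tau)}_{L^\infty\to L^\infty}\le 1$. But in the boundary case, which the proposition covers, there is a genuine gap exactly at the step you flag — and you have mislocated it. The identity $\partial_\tau S_\varepsilon(1,\tau)v=-\varepsilon S_\varepsilon(1,\tau)\Delta_\tau v$ holds only for $v$ in the domain of the Dirichlet realization of $\Delta_\tau$, i.e.\ $v\in H^2\cap H^1_0$, and you apply it to $v=C(\tau)\overline u_\varepsilon(\tau)$, which violates the Dirichlet condition for \emph{every} $\tau\in(0,1)$, not just at the endpoints: while $\dLap\overline u_\varepsilon(\tau)=\exp(\varepsilon\tau\dLap)\dLap u_0$ does vanish on $\partial M$ (admissibility puts $\dLap u_0\in\cicm$), the terms $\Delta_\sigma\overline u_\varepsilon(\tau)$ inside $C(\tau)$ do not, since $\overline u_\varepsilon(\tau)$ vanishes on $\partial M$ but its second normal derivatives do not, and $\Delta_\sigma\neq\dLap$ there. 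So $C(0)=C(1)=0$ kills the boundary terms \emph{in time}, but the problem is the boundary terms \emph{in space}: running your argument weakly against $\psi(\tau)=S_\varepsilon(1,\tau)^*\phi$, Green's identity for $\Bra{C(\tau)\overline u_\varepsilon(\tau),\Delta_\tau\psi(\tau)}_0$ produces a surface integral over $\partial M$ involving $\dd\psi(\tau)(\nu_\tau)$, and since gradient bounds for the adjoint propagator degenerate like $(\varepsilon(1-\tau))^{-1/2}$ near $\tau=1$, the naive estimate for this contribution is $O(\varepsilon^{3/2})$, which destroys the claimed $O(\varepsilon^2)$. A repair is possible — because $u_0$ is supported at positive distance from $\partial M$, $\overline u_\varepsilon(\tau)$ and its derivatives are exponentially small near $\partial M$ as $\varepsilon\to0$, so the surface terms are negligible — but that heat-kernel decay argument is precisely what is missing from your sketch; admissibility of $u_0$ alone does not place $C(\tau)\overline u_\varepsilon(\tau)$ in $D(\Delta_\tau)$.

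It is instructive to compare with the paper's proof, which sidesteps all propagator calculus and domain bookkeeping. There one compares both $u_\varepsilon$ and $\overline u_\varepsilon$ with the explicit approximant $\widetilde u_\varepsilon=u_0+\varepsilon\int_0^t\Delta_\tau u_0\,\dd\tau$: a one-line computation gives $(\partial_t-\varepsilon\Delta_t)\widetilde u_\varepsilon=-\varepsilon^2\int_0^t\Delta_t\Delta_s u_0\,\dd s$, whose $L^\infty$ bound is trivial because all operators hit only the \emph{fixed} smooth, compactly supported $u_0$; the weak maximum principle applied to $\widetilde u_\varepsilon-u_\varepsilon-C\varepsilon^2 t$ yields $\norm{u_\varepsilon-\widetilde u_\varepsilon}_{L^\infty}=O(\varepsilon^2)$, and $\overline u_\varepsilon(1,\cdot)=\exp(\varepsilon\dLap)u_0$ agrees with $\widetilde u_\varepsilon(1,\cdot)=u_0+\varepsilon\dLap u_0$ to second order. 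Boundary conditions are then needed only for $u_0$ and its images under time-$t$ Laplacians, which is exactly what admissibility delivers, and the Neumann case follows by adding the parabolic Hopf lemma. Your approach buys a structural identity (an exact $\varepsilon^2$ remainder formula exploiting the vanishing time-average of $\Delta_\tau-\dLap$) and would generalize to settings without a maximum principle, but as written it trades the paper's elementary comparison for unresolved boundary-regularity obligations; to salvage it you must either prove the exponential smallness of $\overline u_\varepsilon$ near $\partial M$ or restructure the summation by parts so that $\Delta_\tau$ only ever acts on functions built from $u_0$ itself — which, in the end, is essentially the paper's proof.
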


\begin{proof}
The proof is a simplification of the one given in \cite{Krol1991}. Let
\begin{align*}
\widetilde u_\varepsilon = u_0 + \varepsilon \int_0^t \Delta_\tau u_0\, \dd\tau\,.
\end{align*}
We start with the Dirichlet boundary condition case.
Let $\mathcal L^{\varepsilon} \coloneqq \partial_t - \varepsilon \Delta_t$ and
observe that $\mathcal L^\varepsilon \widetilde u_\varepsilon = -\varepsilon^2 \int_0^t \Delta_t \Delta_s u_0\, \dd s$.
As $u_0$ is smooth and $M$ compact,
\[
C \coloneqq \sup_{t \in [0,1]}\left\|{\left(\int_0^t \Delta_t \Delta_\tau u_0\, \dd\tau\right)}\right\|_{L^\infty(M)} < \infty\,.
\]
By definition, $\mathcal L^\varepsilon u^\varepsilon = 0$.
Thus, $\mathcal L^\varepsilon(\widetilde{u}_\varepsilon- u_\varepsilon - C\varepsilon^2 t) \leq 0$; $u_\varepsilon$ and $\widetilde u_\varepsilon$ agree at $t=0$;
and (by the admissibility of the initial value) both satisfy Dirichlet boundary conditions. The weak maximum principle (\cref{lemma:wmp}) therefore yields that
\[
\max_{[0,1]\times M}(\widetilde u_\varepsilon - u_\varepsilon - C\varepsilon^2t) = \max_{[0,1]\times \partial M\cup \lbrace{0\rbrace\times M}}( - C\varepsilon^2t)\leq 0\,.
\]
As a consequence, we have $\max_{[0,1]\times M}(\widetilde u_\varepsilon - u_\varepsilon) \leq  C\varepsilon^2$.

One may prove $(u_\varepsilon - \widetilde u_\varepsilon)\leq  C\varepsilon^2$ along the same lines. Thus, $\norm{u_\varepsilon - \widetilde u_\varepsilon}_{L^\infty([0,1] \times M)} = O(\varepsilon^2)$. For $t=1$, this implies the uniform expansion
\begin{equation}\label{eq:taylorexp}
u_\varepsilon(1,\cdot) = \widetilde{u}_{\varepsilon}(1,\cdot) + O(\varepsilon^2) = u_0 + \varepsilon \overline \Delta u_0 + O(\varepsilon^2)\,.
\end{equation}
The right-hand side coincides up to second order with the expansion of $\overline{u}_\varepsilon(1,\cdot) (= \exp(\varepsilon\dLap)u_0)$
which yields the claim.

For Neumann boundary conditions, the proof goes along the same lines, where the
weak maximum principle must be augmented with the parabolic Hopf boundary
point lemma (see \cite[Chapter 3, Thm.~6]{Protter1984}) to ensure that a strict
maximum cannot be achieved at positive time.
\end{proof}

We restate \cref{eq:taylorexp} for further reference, and also observe that it can be
interpreted as the time-continuous generalization of \cite[Thm.~5.1]{Froyland2015a}.

\begin{corollary}
\label{cor:taylorexp}
Under the assumptions of \cref{thm:averaging1},
\begin{align}
u_\varepsilon(1,x) &= u_0(x) + \varepsilon\dLap u_0(x) + O(\varepsilon^2)\,,
\end{align}
uniformly in $x$.
\end{corollary}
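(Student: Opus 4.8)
The plan is to extract the claim directly from the uniform estimate already established in the proof of \cref{thm:averaging1}. There the approximant $\widetilde u_\varepsilon = u_0 + \varepsilon \int_0^t \Delta_\tau u_0\, \dd\tau$ was compared against the exact solution $u_\varepsilon$ by means of the weak maximum principle, producing the bound $\norm{u_\varepsilon - \widetilde u_\varepsilon}_{L^\infty([0,1]\times M)} = O(\varepsilon^2)$ as $\varepsilon \to 0$. This bound holds uniformly over all $x \in M$ and over all times, so it is legitimate to restrict to the single time slice $t = 1$ while retaining the uniformity in $x$.

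First I would evaluate the approximant at the final time. By the definition of the dynamic Laplacian in \cref{eq:dynamic_laplace}, namely $\dLap = \int_0^1 \Delta_t\, \dd t$, linearity of the time-integral gives
\begin{equation*}
\widetilde u_\varepsilon(1,\cdot) = u_0 + \varepsilon \int_0^1 \Delta_\tau u_0\, \dd\tau = u_0 + \varepsilon\, \dLap u_0\,.
\end{equation*}
Combining this identity with the $O(\varepsilon^2)$ estimate restricted to $t=1$ then yields $u_\varepsilon(1,x) = u_0(x) + \varepsilon\, \dLap u_0(x) + O(\varepsilon^2)$ uniformly in $x$, which is exactly the asserted expansion.

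Since the analytic core—the maximum-principle comparison and the resulting uniform $O(\varepsilon^2)$ control—has already been carried out in the proof of \cref{thm:averaging1}, no substantive obstacle remains; the corollary is essentially a reformulation of \cref{eq:taylorexp} specialized to the final time and read off pointwise. The only point worth a word is the passage from $\int_0^1 \Delta_\tau u_0\, \dd\tau$ to $\dLap u_0$, but this is immediate from the very definition of $\dLap$ in \cref{eq:dynamic_laplace} as the time-integral of the family $(\Delta_t)_{t\in\mathcal I}$, understood pointwise against the smooth function $u_0$ on the compact manifold $M$, so that all quantities involved are finite and the operator-valued integral acts as the integral of the images $\Delta_\tau u_0$.
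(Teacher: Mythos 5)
Your proposal is correct and matches the paper exactly: the corollary is stated there precisely as a restatement of \cref{eq:taylorexp}, i.e., the uniform $O(\varepsilon^2)$ bound on $u_\varepsilon - \widetilde u_\varepsilon$ from the proof of \cref{thm:averaging1} restricted to $t=1$, combined with the identity $\widetilde u_\varepsilon(1,\cdot) = u_0 + \varepsilon\dLap u_0$ from the definition \cref{eq:dynamic_laplace}. Nothing further is needed.
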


\begin{corollary}
\label{cor:taylorexp2}
Under the assumptions of \cref{thm:averaging1},
\[
u_\varepsilon(1, \cdot) = u_0 + \varepsilon\dLap u_0 + O(\varepsilon^2)\,,
\]
in $L^p(M,\mass)$ for all $p \in [1,\infty]$.
\end{corollary}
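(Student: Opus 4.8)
The plan is to deduce \cref{cor:taylorexp2} directly from the uniform (that is, $L^\infty$) expansion of \cref{cor:taylorexp}, exploiting that $M$ is compact and hence carries finite total mass $\int_M\mass < \infty$. First I would isolate the remainder
\[
R_\varepsilon \coloneqq u_\varepsilon(1,\cdot) - u_0 - \varepsilon\dLap u_0\,,
\]
so that \cref{cor:taylorexp} reads precisely as $\norm{R_\varepsilon}_{L^\infty(M)} = O(\varepsilon^2)$ as $\varepsilon\to 0$. This already disposes of the endpoint $p=\infty$, which is nothing but a restatement of the preceding corollary.

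For finite $p\in[1,\infty)$ I would bound the $L^p(M,\mass)$-norm against the $L^\infty$-norm by the elementary finite-measure inclusion estimate
\[
\norm{R_\varepsilon}_{L^p(M,\mass)} = \left(\int_M \abs{R_\varepsilon}^p\,\mass\right)^{1/p} \leq \norm{R_\varepsilon}_{L^\infty(M)}\left(\int_M \mass\right)^{1/p}\,.
\]
Since $M$ is compact, the total mass $\int_M\mass$ is a finite constant, so the right-hand side is $O(\varepsilon^2)$ for each fixed $p$, which is exactly the assertion. No new analytic input beyond \cref{cor:taylorexp} is needed; the entire strength of the statement is inherited from the uniform expansion through the continuity of the inclusion $L^\infty(M)\hookrightarrow L^p(M,\mass)$ on a finite-measure space.

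The only point that requires a moment's attention — and it is the mildest of obstacles — is whether the implied constant in $O(\varepsilon^2)$ can be taken uniformly across all $p\in[1,\infty]$. I expect this to follow immediately from the bound $\bigl(\int_M\mass\bigr)^{1/p} \leq \max\set{1,\textstyle\int_M\mass}$, valid for every $p\geq 1$, so that the single constant furnished by \cref{cor:taylorexp}, multiplied by $\max\set{1,\int_M\mass}$, simultaneously controls every $L^p$-remainder. Consequently I do not anticipate any genuine difficulty: the compactness of $M$ reduces \cref{cor:taylorexp2} to the already-established uniform estimate.
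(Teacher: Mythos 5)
Your proposal is correct and matches the paper's proof, which likewise handles $p=\infty$ via \cref{cor:taylorexp} and settles $p\in[1,\infty)$ by the continuity of the injection $L^\infty(M,\mass)\hookrightarrow L^p(M,\mass)$ on the finite-measure space $M$ — exactly the inclusion estimate you write out. Your added remark on uniformity of the constant over $p$ is a harmless (and correct) refinement not needed for the statement as formulated.
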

\begin{proof}
For $p=\infty$, our claim corresponds to \cref{cor:taylorexp}. For $p\in[1,\infty)$, the natural injection $L^\infty(M,\mass) \hookrightarrow L^p(M,\mass)$
is well-defined and continuous since $\mass(M)$ is finite, which yields the claim.
\end{proof}

\begin{corollary}\label{cor:eq6}
Under the assumptions of \cref{thm:averaging1},
\begin{equation}\label{eq:transportapproximation}
T_0^1(S, u_0)=\int_S u_0(x)\, \mass - \int_S u_\varepsilon(x,1)\, \mass = -\varepsilon \int_S \dLap u_0\, \mass + O(\varepsilon^2)\,.
\end{equation}
\end{corollary}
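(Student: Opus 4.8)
The plan is to obtain this directly from the expansion already established, since nothing beyond integrating a uniform error remains to be done. First I would rewrite the transport functional in the form it takes in Lagrangian coordinates, \cref{eq:integralcomplement2}, which at the final time $t=1$ reads $T_0^1(S,u_0) = \int_S \left(u_0 - u_\varepsilon(\cdot,1)\right)\mass$. Then I would insert the pointwise expansion from \cref{cor:taylorexp}, namely $u_\varepsilon(\cdot,1) = u_0 + \varepsilon\dLap u_0 + O(\varepsilon^2)$ uniformly in $x$, so that $u_0 - u_\varepsilon(\cdot,1) = -\varepsilon\dLap u_0 + O(\varepsilon^2)$, again uniformly. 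Integrating over $S$ gives
\[
T_0^1(S,u_0) = -\varepsilon\int_S \dLap u_0\,\mass + \int_S O(\varepsilon^2)\,\mass\,.
\]

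The one point that actually needs an argument is that the integrated remainder is genuinely $O(\varepsilon^2)$, and in particular that the implied constant does not depend on the set $S$. There are two clean ways to see this. The direct route uses the uniformity in \cref{cor:taylorexp}: there is a constant $C$, independent of $S$, with $\abs{u_0 - u_\varepsilon(\cdot,1) + \varepsilon\dLap u_0}\le C\varepsilon^2$ pointwise, whence $\abs{\int_S O(\varepsilon^2)\,\mass}\le C\varepsilon^2\,\mass(S)\le C\varepsilon^2\,\mass(M)$, and $\mass(M)<\infty$ because $M$ is compact. Alternatively, one may invoke the $L^1$ statement of \cref{cor:taylorexp2} with $p=1$ and estimate $\abs{\int_S O(\varepsilon^2)\,\mass}\le \norm{u_0 - u_\varepsilon(\cdot,1) + \varepsilon\dLap u_0}_{L^1(M,\mass)} = O(\varepsilon^2)$, which yields the bound uniformly over all measurable $S$ at once. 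Either way, the remainder is absorbed into $O(\varepsilon^2)$ and the claimed identity follows.

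I do not expect a genuine obstacle here; the corollary is essentially a restatement of \cref{thm:averaging1} paired against a fixed set. The only subtlety worth flagging explicitly is the order of quantifiers: the estimate must hold with a constant independent of $S$, which is exactly what the uniform-in-$x$ (equivalently, $L^1$) form of the expansion supplies, together with the finiteness of the total mass $\mass(M)$ on the compact manifold.
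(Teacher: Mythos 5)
Your argument is correct and coincides with the paper's own proof, which is exactly the one-line statement ``This follows by integrating \cref{eq:taylorexp} over $S$.'' Your additional care about the uniformity of the remainder --- that the constant in \cref{cor:taylorexp} is independent of $S$, so the integrated error is bounded by $C\varepsilon^2\,\mass(M)<\infty$ on the compact $M$ --- is precisely the (implicit) detail that makes the paper's one-liner valid, so you have simply spelled it out.
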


\begin{proof}
This follows by integrating \cref{eq:taylorexp} over $S$.
\end{proof}

\subsection{Convergence of singular values}\label{sec:conv_sing_vals}

We denote by $P^\varepsilon_t$ and $\overline P^\varepsilon_t$ the time-$t$ solution operators of \cref{mlade,mlavade}, respectively, i.e.,
$u_\varepsilon(t,\cdot) = P^\varepsilon_t u_0$ and $\overline{u}_\varepsilon(t,\cdot)=\overline{P}^{\varepsilon}u_0$.
To reduce notational clutter, we write $P^\varepsilon \coloneqq P^\varepsilon_1$ and $\overline P^\varepsilon \coloneqq \overline P^\varepsilon_1$.
We only treat homogeneous Dirichlet boundary in this section.

The previous section dealt with the relationship of $P^\varepsilon$ and $\overline P^\varepsilon$ in the limit $\varepsilon \rightarrow 0$.
In particular, \cref{thm:averaging1}, with this notation, is
\begin{align}\label{eq:averaging1revisited}
\norm{(P^\varepsilon - \overline P^\varepsilon)u_0}_{L^\infty(M)} &= O(\varepsilon^2) \quad \textnormal{ for\ all } u_0 \in \cicm\,.
\end{align}
Recall that as $P^\varepsilon$ is compact (cf.~\cref{sec:parabolicpdes}), the first singular value of $P^\varepsilon$
is given by the operator norm of $P^\varepsilon\colon L^2(M,\mass) \rightarrow L^2(M,\mass)$. By \cref{lemma:l2contr}, $P^\varepsilon$ is a contraction on $L^2(M,\mass)$,
hence $\norm{P^\varepsilon}\leq 1$.

If $M$ is boundaryless, then $P^\varepsilon \mathds{1}_M = \mathds{1}_M$, and,
as a consequence, $\norm{P^\varepsilon}=1$ for any $\varepsilon>0$.
Since the subspace of constant functions is a trivial invariant subspace, we restrict the domain of $P^\varepsilon$ to its orthogonal complement, the space of mean-free functions. If $M$ has a boundary, we consider $P^\varepsilon$ with its domain the entire $L^2(M,\omega)$.

With these preparations, we denote the largest \emph{nontrivial} singular value by $\sigma^\varepsilon$, and a corresponding (normalized)
left singular vector by $v^\varepsilon$, i.e.,
\begin{align*}
\norm{v^\varepsilon}_{L^2(M,\mass)} &= 1, & \norm{P^\varepsilon} = \norm{P^\varepsilon v^\varepsilon}_{L^2(M,\mass)} &= \sigma^\varepsilon \,.
\end{align*}
For the sake of brevity, let $\norm{\cdot}_0 \coloneqq \norm{\cdot}_{L^2(M,\mass)}$ and $\Bra{\cdot,\cdot}_0 = \Bra{\cdot,\cdot}_{L^2(M,\mass)}$.
\Cref{eq:averaging1revisited} suggests the conjecture that
\begin{align}\label{eq:singularvaluenorms}
\abs{\norm{P^\varepsilon} - \norm{\overline P^\varepsilon}} = o(\varepsilon)\,,
\end{align}
where the norm is the operator norm. By the spectral mapping theorem (see, for instance, \cite[Sect.~1, Thm.~2.4(c)]{Pazy1983}),
\begin{align*}
\norm{\overline P^\varepsilon} &= \mathrm{e}^{\varepsilon \overline \lambda} = 1 + \varepsilon \overline \lambda + o(\varepsilon)\,,
\end{align*}
where $\overline \lambda<0$ is the largest, i.e., smallest in absolute value,
nontrivial eigenvalue of the dynamic Laplacian. Thus, \cref{eq:singularvaluenorms}
is equivalent to
\begin{align}
\norm{P^\varepsilon} = 1 + \varepsilon \overline \lambda + o(\varepsilon)\,.
\end{align}
It can be interpreted as an expansion of the first singular value of $P^\varepsilon$ in $\varepsilon$,
in analogy to the expansion obtained in \cref{cor:taylorexp}. We will prove the following equivalent statement.

\begin{theorem}\label{thm:spectralconvergence}
With the above notation and assuming a Dirichlet boundary, one has
\begin{align}\label{eq:toprove_spectrum}
\lim\limits_{\varepsilon \rightarrow 0}\frac{ \sigma^\varepsilon - 1}{\varepsilon}  &= \overline \lambda\,.
\end{align}
\end{theorem}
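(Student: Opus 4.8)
The plan is to prove \eqref{eq:toprove_spectrum} by establishing the two inequalities $\liminf_{\varepsilon\to0}(\sigma^\varepsilon-1)/\varepsilon\ge\overline\lambda$ and $\limsup_{\varepsilon\to0}(\sigma^\varepsilon-1)/\varepsilon\le\overline\lambda$ separately, working throughout with a normalized singular pair $v^\varepsilon,w^\varepsilon$ of $P^\varepsilon$ satisfying $P^\varepsilon v^\varepsilon=\sigma^\varepsilon w^\varepsilon$ and $(P^\varepsilon)^* w^\varepsilon=\sigma^\varepsilon v^\varepsilon$; such a pair exists because $P^\varepsilon$ is compact. The lower bound is the easy direction and does not involve $v^\varepsilon$ at all. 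For any admissible $\phi$ with $\norm{\phi}_0=1$, \cref{cor:taylorexp2} gives $P^\varepsilon\phi=\phi+\varepsilon\dLap\phi+O(\varepsilon^2)$ in $L^2$, whence $\norm{P^\varepsilon\phi}_0=1+\varepsilon\langle\phi,\dLap\phi\rangle_0+o(\varepsilon)$. Since $\sigma^\varepsilon=\norm{P^\varepsilon}\ge\norm{P^\varepsilon\phi}_0$, dividing by $\varepsilon$ and letting $\varepsilon\to0$ yields $\liminf(\sigma^\varepsilon-1)/\varepsilon\ge\langle\phi,\dLap\phi\rangle_0$. Taking the supremum over all such $\phi$ and using that $\cicm$ is a form core for the Dirichlet realization of $\dLap$, so that the supremum of $\langle\phi,\dLap\phi\rangle_0$ over admissible unit vectors equals $\overline\lambda$, gives the asserted lower bound.

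The upper bound is the substantive part. The idea is that, since $\sigma^\varepsilon\to1$, the maximizing directions $v^\varepsilon,w^\varepsilon$ must be asymptotically low-frequency and should converge to a top eigenfunction of $\dLap$. To extract this, I test the two singular-value relations against a fixed admissible $\phi$ and insert the $L^2$ expansion of \cref{cor:taylorexp2} for both $P^\varepsilon$ and its adjoint; the latter is the time-$1$ operator of the adjoint equation $\partial_t=\varepsilon\Delta_{1-t}$, whose time average is again $\dLap$, so $(P^\varepsilon)^*\phi=\phi+\varepsilon\dLap\phi+O(\varepsilon^2)$ as well. Adding the two resulting identities and writing $s^\varepsilon:=v^\varepsilon+w^\varepsilon$ gives
\begin{equation*}
\frac{\sigma^\varepsilon-1}{\varepsilon}\,\langle s^\varepsilon,\phi\rangle_0=\langle s^\varepsilon,\dLap\phi\rangle_0+O(\varepsilon),
\end{equation*}
where the $O(\varepsilon^2)$ remainders are controlled using $\norm{v^\varepsilon}_0=\norm{w^\varepsilon}_0=1$. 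If, along a subsequence, $(\sigma^\varepsilon-1)/\varepsilon\to c$ and $s^\varepsilon\rightharpoonup 2v^*$ weakly with $v^*\ne0$, passing to the limit shows $\langle v^*,\dLap\phi\rangle_0=c\,\langle v^*,\phi\rangle_0$ for every admissible $\phi$; by self-adjointness and elliptic regularity $v^*$ is then a genuine Dirichlet eigenfunction of $\dLap$ with eigenvalue $c$. Hence $c\le\overline\lambda$, and together with the lower bound $c\ge\overline\lambda$ this forces $c=\overline\lambda$; as every subsequence has a further subsequence with this limit, the full limit equals $\overline\lambda$.

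What remains---and this is the main obstacle---is to show that the weak limit $v^*$ is nonzero, i.e.\ that no mass is lost to high frequencies. Here the key observation is a uniform \emph{time-integrated} energy bound: multiplying \eqref{mlade} by $u_\varepsilon$ and integrating yields $1-(\sigma^\varepsilon)^2=2\varepsilon\int_0^1\norm{\dd u_\varepsilon(\tau,\cdot)}_{g_\tau}^2\,\dd\tau$, and since the lower bound already gives $1-(\sigma^\varepsilon)^2=O(\varepsilon)$, uniform ellipticity of the family $(g_t)$ on the compact $M\times[0,1]$ produces a bound $\int_0^1\norm{u_\varepsilon(\tau,\cdot)}_{H^1}^2\,\dd\tau\le C$ independent of $\varepsilon$. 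One cannot expect a uniform endpoint bound, since parabolic smoothing degenerates as $\varepsilon\to0$; this is precisely why the estimate must be read in space-time. From $\partial_t u_\varepsilon=\varepsilon\Delta_t u_\varepsilon$ one also gets $\int_0^1\norm{\partial_t u_\varepsilon}_{H^{-1}}^2\,\dd\tau=O(\varepsilon^2)\to0$. The Aubin--Lions--Simon lemma then makes $\set{u_\varepsilon}$ precompact in $L^2([0,1]\times M)$, and any limit $u^*$ is time-independent (because $\partial_t u_\varepsilon\to0$) and satisfies $\norm{u^*}_{L^2([0,1]\times M)}^2=\lim\int_0^1\norm{u_\varepsilon(\tau,\cdot)}_0^2\,\dd\tau=1$ (as $\norm{u_\varepsilon(\tau,\cdot)}_0\to1$ uniformly in $\tau$). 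Thus $u^*(\tau,\cdot)\equiv v^*$ with $\norm{v^*}_0=1\ne0$, and the same smallness of $\partial_t u_\varepsilon$ shows $v^\varepsilon,w^\varepsilon\rightharpoonup v^*$, closing the argument. I expect verifying the hypotheses of Aubin--Lions and the uniform comparability of the $(g_t)$ to be the only remaining points requiring care.
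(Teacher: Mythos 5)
Your proposal is correct in substance, but on the hard direction it takes a genuinely different route from the paper's. The lower bound coincides with the paper's Step~1 (testing $\norm{P^\varepsilon\phi}_0$ on admissible $\phi$ via \cref{cor:taylorexp2} and density of $\cicm$ in $H^1_0$). For the upper bound, the paper works with the energy identity $\frac{1-(\sigma^\varepsilon)^2}{\varepsilon}=-2\int_0^1\Bra{v^\varepsilon(t),\Delta_t v^\varepsilon(t)}_0\,\dd t$ and a Fatou/weak-lower-semicontinuity contradiction against the Rayleigh quotient of $\dLap$; the analytic burden there is the \emph{uniform-in-$t$} $H^1$ bound on the evolving singular vector (\cref{lemma:h1bound}), which in turn rests on the $H^1$-propagation estimate \cref{lemma:h11}, the adjoint bound, and the reverse inequality \cref{lemma:reverseineq}. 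You replace all of this: testing both singular-value relations against a fixed admissible $\phi$ (legitimately using that $(P^\varepsilon)^*$ solves the time-reversed equation, whose time average is again $\dLap$, so \cref{cor:taylorexp2} applies to it verbatim) transfers the derivatives onto $\phi$, and the only a priori estimate you need is the \emph{time-integrated} energy bound $\int_0^1\abs{v^\varepsilon(\tau)}_1^2\,\dd\tau=O(1)$, which falls out of the energy identity together with the already-proved lower bound and \cref{lemma:uniform_parabolicity} --- no endpoint $H^1$ control is needed. Aubin--Lions then gives strong space-time $L^2$ compactness, time-independence of the limit, and $\norm{v^*}_0=1$, so no mass escapes to high frequencies. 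This makes essentially all of \cref{sec:spectrum_proofs} (\cref{lemma:h11,lemma:reverseineq,lemma:h1bound}) dispensable at the price of standard compactness machinery, and it yields slightly more than the paper's argument, namely that the singular vectors converge weakly to an actual top eigenfunction of $\dLap$ identified as such. (One aside is wrong, though harmlessly: a uniform endpoint $H^1$ bound \emph{can} be expected for singular vectors --- \cref{lemma:h1bound} proves exactly that; it fails only for arbitrary $L^2$ initial data.)

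Two points need tightening. First, concluding $c\le\overline\lambda$ requires $v^*$ to be a \emph{Dirichlet} eigenfunction, and the distributional identity $\Bra{v^*,\dLap\phi}_0=c\Bra{v^*,\phi}_0$ for $\phi\in\cicm$ plus interior elliptic regularity does not see the boundary condition: interior $L^2$ solutions of $\dLap v=cv$ exist for every $c$. The missing ingredient is already in your hands --- $(u_{\varepsilon_n})$ is bounded in $L^2\bigl((0,1);H^1_0(M,g,\mass)\bigr)$, so the weak limit $u^*\equiv v^*$ lies in $H^1_0$, and integration by parts converts the distributional identity into the weak Dirichlet eigenvalue problem --- but this should be said explicitly instead of invoking ``elliptic regularity''. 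Second, the endpoint weak convergences $v^{\varepsilon_n},w^{\varepsilon_n}\rightharpoonup v^*$ deserve a sentence: from $\norm{\partial_t u_\varepsilon}_{L^2((0,1);H^{-1})}=O(\varepsilon)$ one gets $\norm{u_\varepsilon(t)-u_\varepsilon(0)}_{H^{-1}}=O(\varepsilon)$ uniformly in $t$, hence $u_{\varepsilon_n}(0)\to v^*$ in $H^{-1}(M,g)$, and $L^2$-boundedness upgrades this to weak $L^2$ convergence (likewise at $t=1$, dividing by $\sigma^{\varepsilon_n}\to1$). The absolute continuity underlying your energy identity is supplied by \cref{lemma:galerkin}, and boundedness of $(\sigma^\varepsilon-1)/\varepsilon$ (needed to extract the subsequential limit $c$) follows from the lower bound together with $\sigma^\varepsilon\le1$ (\cref{lemma:l2contr}).
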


\begin{proof}
We split the proof into several steps.

\textbf{Step 1}: We start by proving the lower bound
\begin{align}\label{eq:lowerbound}
\liminf\limits_{\varepsilon \rightarrow 0} \frac{\sigma^\varepsilon - 1}{\varepsilon} \geq \overline \lambda\,.
\end{align}
To this end, the operator-norm definition of $\sigma^\varepsilon$ shows that $\sigma^\varepsilon \geq \norm{P^\varepsilon u}_0$ for all $u \in \cicm$
in the domain of $P^\varepsilon$ which have $\norm{u}_{0} = 1$.
Applying \cref{cor:taylorexp2} to such $u$ yields $P^\varepsilon u = u + \varepsilon \overline \Delta u + o(\varepsilon)$ in $L^2(M,\mass)$.
Therefore, we also have $\norm{P^\varepsilon u}_{0}^2 = \norm{u}_0^2 + 2\varepsilon\Bra{u,\overline \Delta u} + o(\varepsilon)$. Since $\norm{u}_0 = 1$,
we obtain $\liminf\limits_{\varepsilon \rightarrow 0}\frac{(\sigma^\varepsilon)^2 - 1}{\varepsilon} \geq 2\Bra{u,\overline \Delta u}$. The right hand side can be made arbitrarily close to $2 \overline \lambda$, which shows
\begin{align}\label{eq:squareeq}
\liminf\limits_{\varepsilon \rightarrow 0}\frac{(\sigma^\varepsilon)^2 - 1}{\varepsilon} \geq 2\overline \lambda\,.
\end{align}
From \cref{lemma:l2contr} it follows that $0 \leq \sigma^\varepsilon \leq 1$. Thus
$\sigma^\varepsilon \rightarrow 1$ for $\varepsilon\to 0$.  Finally, as $(\sigma^\varepsilon)^2 - 1 = (\sigma^\varepsilon - 1)(\sigma^\varepsilon + 1)$,
we deduce \cref{eq:lowerbound} from \cref{eq:squareeq}.

\textbf{Step 2}: We now prove the upper bound,
\begin{align}\label{eq:upperbound}
\limsup\limits_{\varepsilon \rightarrow 0} \frac{\sigma^\varepsilon - 1}{\varepsilon} \leq \overline \lambda\,,
\end{align}
which is somewhat more involved.
It is based on the identity:
\begin{align}\label{eq:importanteq}
\frac{(\sigma^\varepsilon-1)(\sigma^\varepsilon+1)}{\varepsilon} = \frac{\norm{P^\varepsilon v^\varepsilon}_0^2 - \norm{v^\varepsilon}_0^2}{\varepsilon}  &= 2\int_0^1 \Bra{v^\varepsilon(t),\Delta_t v^\varepsilon(t)}_0\,\dd t\,,
\end{align}
where the first equality is satisfied as $v^\varepsilon$ is first non-trivial singular vector, $v^\varepsilon(t) \coloneqq P^\varepsilon_t(v^\varepsilon)$, and the second equality is a direct consequence of the fundamental theorem of calculus applied to $f^\varepsilon(t) \coloneqq \Bra{v^\varepsilon(t), v^\varepsilon(t)}_0$.
To connect \cref{eq:importanteq} to the theory of elliptic partial differential equations, in analogy to \cite{Evans2010} we introduce the bilinear form
\[
a_t(u,w) \coloneqq -\Bra{u,\Delta_t w}\,,
\]
defined (by unique continuous extension) for $u,w \in H^1_0(M,g,\mass)$, where $g$ is an arbitrary fixed metric (e.g. $g_0$) used to measure lengths and angles.
The So\-bo\-lev space $H_0^1(M,g,\mass) \subset L^2(M,\mass)$ is defined as the Hilbert space with norm $\norm{\cdot}_1^2 \coloneqq \norm{\cdot}_0^2 + |\cdot|^2_1$,
here $|\cdot|_1$ is induced by the bilinear form
\begin{align}
\Bra{u,v}_1 &\coloneqq \int_M g(\grad_g u, \grad_g v)\,\mass
\end{align}
using the metric $g$ and volume-form $\mass$. This norm is equivalent to the usual $H^1(M,g,\dd g)$ Sobolev norm since $\mass$ is smooth and nonvanishing on the compact manifold $M$. As usual, $H^1_0(M,g,\mass)$ is defined as the completion of $\cicm$ w.r.t.~the norm $\norm{\cdot}_1$.
We have shown in Step 1 that $\sigma^\varepsilon \rightarrow 1$ for $\varepsilon\to0$. As a consequence, \cref{eq:importanteq} is equivalent to
\begin{align}\label{eq:liminf2}
\beta \coloneqq \liminf\limits_{\varepsilon \rightarrow 0} \frac{1 - \sigma^\varepsilon}{\varepsilon} &= \liminf\limits_{\varepsilon \rightarrow 0} \int_0^1 a_t(v^\varepsilon(t), v^\varepsilon(t))\,\dd t\,.
\end{align}
\Cref{eq:liminf2} is the negative of the left hand side of \cref{eq:upperbound}.
The bilinear form $a_t(\cdot,\cdot)$ on $H^1_0(M,g,\mass)$ is positive, continuous and coercive (cf.~\cref{lemma:uniform_parabolicity}), and thus induces
a norm $\norm{\cdot}_{a_t}$that is equivalent to $|\cdot|_1$. In particular, $\norm{\cdot}_{a_t}$-continuous functionals are $|\cdot|_1$-continuous functionals and vice versa. Therefore, the weak topologies for these norms coincide. The Banach-Steinhaus theorem,
with the norm $\norm{\cdot}_{a_t}$, thus states that if $u_n \rightarrow u$  weakly in $H^1_0(M,g,\mass)$, then
\begin{align}\label{eq:banachsteinhaus}
a_t(u,u) \leq \liminf\limits_{n \rightarrow 0} a_t(u_n,u_n)\,.
\end{align}

We are now in a position to prove \cref{eq:toprove_spectrum} by contradiction. To do so, we will employ a construction similar to the ``direct method'' from the calculus of variations; cf.~\cite{Gelfand1963}.
To this end, we take a null sequence $(\varepsilon_n)_{n \in \mathds{N}}$ for which $\int_0^1 a_t(v^{\varepsilon_n}(t), v^{\varepsilon_n}(t))\,\dd t$ converges to $\beta$. Assume, for the sake of contradiction, that
\begin{align}\label{eq:forcontradiction}
\beta = \lim\limits_{n \rightarrow \infty } \int_0^1 a_t(v^{\varepsilon_n}(t),v^{\varepsilon_n}(t))\, < -\overline \lambda\,.
\end{align}
We will use a claim whose proof we defer:

\begin{claim}
	There exist $v \in H^1_0(M,g,\mass)$ with $\norm{v}_0 = 1$ and a subsequence of $(\varepsilon_n)_n$, for simplicity again denoted by $(\varepsilon_n)_n$, for which the sequences $(v^{\varepsilon_n}(t))_n$ converge weakly in $H^1_0(M,g,\mass)$ to $v$ for every $t\in[0,1]$.
\end{claim}

For this specific $v$, Fatou's lemma and \cref{eq:banachsteinhaus} imply that
\begin{align}\label{eq:almost_contradiction}
\int_0^1 a_t(v,v)\, \dd t \leq \liminf\limits_{n \rightarrow \infty}\int_0^1 a_t(v^{\varepsilon_n}(t), v^{\varepsilon_n}(t))\,\dd t = \beta  < -\overline \lambda\,.
\end{align}
The left hand side, in a weak sense, is equal to $-\Bra{v,\overline \Delta v}_0$, the bilinear form associated to the weak form of the dynamic Laplacian.
It is well-known that the Rayleigh quotient $v\mapsto-\Bra{v ,\overline \Delta v }_0/\Bra{v,v}_0$ is minimized by $-\overline \lambda$ on $H^1_0(M,g,\mass)$; see, for instance, \cite[Sect.~6.5, Thm.~2]{Evans2010}.
With $\norm{v}_0=1$, \cref{eq:almost_contradiction} states that $v$'s Rayleigh quotient is strictly lower, hence a contradiction.

It follows that $\beta \geq -\overline \lambda$, we conclude using \cref{eq:liminf2} that
\begin{align*}
\liminf\limits_{\varepsilon \rightarrow 0} \frac{1 - \sigma^\varepsilon}{\varepsilon} \geq -\overline \lambda\,,
\end{align*}
which proves Step 2.

\textbf{Step 3, proof of claim}:
Our proof requires that there exists $\varepsilon_0 > 0$ such that $C \coloneqq \sup_{0\leq \epsilon<\varepsilon_0, t \in [0,1]} \abs{v^\varepsilon(t)}_1$ is finite, this part is done in \cref{sec:spectrum_proofs}.

Assuming that $C$ is finite, the Rellich-Kondrachev theorem \cite[Sect.~4, Prop.~3.4]{Taylor2011},
states that $v^{\varepsilon_n}(0) \rightarrow v$ in $L^2(M,\mass)$ (up to passing to a subsequence if necessary), and therefore $\norm{v}_0 = 1$.
After again passing to a subsequence if necessary, we may assume $v^{\varepsilon_n}(0) \rightarrow v\in H^1_0(M,g,\mass)$ weakly in $H^1(M,g,\mass)$ by the (sequential) Banach-Alaoglu theorem; see, for instance, \cite[Thm.~3.2.1]{Buhler2018}.

To show that this limit is attained by $v^\varepsilon(t)$ also for $t \neq 0 $ as $\varepsilon \rightarrow 0$, we differentiate $h^\varepsilon(t) \coloneqq \norm{v^\varepsilon(t) - v^\varepsilon(0)}_0^2$, and apply the fundamental theorem of calculus to yield
\[
\norm{v^\varepsilon(t)-v^\varepsilon(0)}_0^2 = 2\varepsilon\left\lvert\int_0^t a_\tau( v^\varepsilon(\tau), v^\varepsilon(\tau) - v^\varepsilon)\,\dd\tau\right\rvert \leq 4\varepsilon C^2C'\,,
\]
where
\[
C' \coloneqq \sup_{t\in [0,1],~u,w \in H^1_0(M,g,\mass)}|a_t(u,w)|/(|u|_1|w|_1) < \infty\,;
\]
see \cref{lemma:uniform_parabolicity}.
We may apply the fundamental theorem of calculus due to the absolute continouity ensured by \cite[Sect.~5.9, Thm.~3]{Evans2010}, see also \cref{sec:parabolicpdes}.
As $v^{\varepsilon_n}(0) \rightarrow v$, it follows that $v^{\varepsilon_n}(t) \rightarrow v$ in $L^2(M,\mass)$ for all $t \in [0,1]$. In particular, $v$ is the only $L^2(M,g,\mass)$ accumulation point in
the set $F \coloneqq \left\{v^{\varepsilon_n}(t)\right\}_{n\in\mathds{N}, t \in [0,1]}$, therefore also the only weak $H^1(M,g,\mass)$ accumulation point.
The sequential Banach-Alaoglu theorem guarantees that the set $F$ is weakly sequentially compact in $H^1(M,g,\mass)$. Combining this with the fact that $v$ is its only accumulation point yields weak convergence of $v^\varepsilon(t) \rightarrow v$ in $H^1(M,g,\mass)$ for all $t \in [0,1]$.

This finishes the proof of \cref{thm:spectralconvergence}.
\end{proof}

\subsubsection{Convergence of eigenvectors}

The proof of \cref{thm:spectralconvergence} also shows that the corresponding
eigenvectors must converge in $L^2$ (in fact, even weakly in $H^1$).
Since, in general, the singular vectors of $P^\varepsilon$ satisfy different
compatibility conditions at the boundary to those of $\overline P^\varepsilon$,
this is somewhat surprising.

\section{Diffusive transport and surface area}\label{sec:ldm}

In this section, we look at properties of the surface area form $\dd \overline A$ in the geometry
of mixing, and how it relates to other, similar, area forms obtained from different types of averaging.

In the setting of the advection-diffusion equation, we have assumed that the time set
$\mathcal I$ is the unit interval equipped with the Lebesgue measure.
For the purpose of this section (only), we may weaken this assumption towards
$(\mathcal{I},\dd t)$ being a probability space, such as a finite set of numbers
equipped with the normalized counting measure, or a compact interval equipped
with the Lebesgue measure normalized by the interval's length.
By the term \emph{surface}, we refer to a smooth, oriented, embedded (codimension-1) submanifold.

\subsection{Surface area in the geometry of mixing}\label{sec:sagm}

Let $g$ be any metric on the material manifold $M$, we call $g$ the ``reference metric''. This could be, for instance, some
``universal'' spatial metric (the way we measure lengths and volume), defined on $M$, or any of the diffusion-adapted metrics from $(g_t)_{t\in\mathcal{I}}$.
The choice of $g$ is in analogy to the choice of local coordinates in differential
geometry -- we will derive expressions for various quantities in terms of $g$. The
metric $g$ is in no way required to be related to the physical transport process under consideration.
In particular, if $g$ is the Euclidean metric in some coordinate chart,
we obtain coordinate representations in that chart.

As before, define a mass-induced surface area form $\dd A$ on any surface $\Gamma\subset M$ via
$\iota_\nu\mass$, where $\nu$ is the $g$-unit normal vector field\footnote{
If $\Gamma$ is the boundary of a full-dimensional submanifold, we take the outward-pointing unit normal
}

With this notation, $\overline C \coloneqq \overline g^{-1} g$ and $C_t \coloneqq g^{-1}_t g$ are tangent bundle isomorphisms, i.e., $\overline{C},C_t\colon TM \rightarrow TM$. Then
\[
\overline C = \left(\int_{\mathcal I}g_t^{-1}\,\dd t\right)g = \int_{\mathcal I} C_t\,\dd t\,.
\]
For $v \in T_xM \subset TM$, we have that
\begin{multline*}
\norm*{\overline Cv}_{\overline g}^2 = \overline g\left(\overline Cv,\overline Cv\right) = \left[\overline g\left(\overline Cv\right)\right]\left(\overline Cv\right) = \\
\left[\overline g\,\overline g^{-1} gv\right]\left(\overline Cv\right) = g(v)\left(\overline Cv\right)= g\left(v,\overline Cv\right).
\end{multline*}
Denote by $\nu_t$, $t\in[0,1]$, and $\overline{\nu}$ the unit normal vector fields w.r.t.~$g_t$ and $\overline{g}$ on $\Gamma$.
As with the reference metric, we define
\begin{align}\label{eq:time_t_area}
\dd A_t &\coloneqq \iota_{\nu_t}\mass\,, & \dd \overline{A} &\coloneqq \iota_{\overline\nu}\mass\,.
\end{align}
In other words, corresponding to the three types of metrics---reference $g$, time-dependent $(g_t)_t$ and time-averaged $\overline{g}$---we derive three area forms ($\dd A$, $\dd A_t$, and $\dd\overline{A}$) from the mass form.

We now show how to relate to each other area form that are induced by the mass
form via different metrics on a surface $\Gamma$.

\begin{lemma}\label{lemma:surfacechange}
Let $g,\tilde g$ be metrics on $M$. Let $\Gamma$ be a surface in $M$, $\tilde C \coloneqq \tilde g^{-1} g$, and $\nu$ and $\tilde \nu$ their respective (consistently oriented) unit normal vector fields on $\Gamma$. Then
\[
\iota_{\tilde\nu}\mass = g(\nu, \tilde \nu)\,\iota_{\nu}\mass = g\left(\nu,\tilde C\nu\right)^{1/2}\,\iota_{\nu}\mass\,.
\]
\end{lemma}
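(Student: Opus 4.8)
The plan is to reduce everything to a pointwise identity at each $x\in\Gamma$ and to exploit the fact that the restriction of an interior product $\iota_X\mass$ to $\Gamma$ detects only the component of $X$ transverse to $T_x\Gamma$. Set $n=\dim M$. First I would record this transversality observation: for $X\in T_xM$ decomposed as $X = a\nu + w$ with $w\in T_x\Gamma$, the $(n-1)$-form $\iota_w\mass$ annihilates $T_x\Gamma$, since evaluating it on $n-1$ vectors tangent to $\Gamma$ feeds $\mass$ a total of $n$ vectors lying in the $(n-1)$-dimensional space $T_x\Gamma$, which are necessarily linearly dependent. Hence, after pullback to $\Gamma$, $\iota_X\mass = a\,\iota_\nu\mass$.

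Applying this to $X=\tilde\nu$ with its decomposition $\tilde\nu = a\nu + w$, and pairing with $\nu$ in the metric $g$, gives $a = g(\nu,\tilde\nu)$, because $g(\nu,\nu)=1$ while $g(\nu,w)=0$ for $w\in T_x\Gamma$. This yields the first equality $\iota_{\tilde\nu}\mass = g(\nu,\tilde\nu)\,\iota_\nu\mass$ on $\Gamma$.

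For the second equality I would identify $\tilde\nu$ explicitly in terms of $\tilde C$. The claim is that $\tilde C\nu = \tilde g^{-1}g\nu$ is $\tilde g$-orthogonal to $T_x\Gamma$: for $w\in T_x\Gamma$ one computes $\tilde g(\tilde C\nu, w) = [g\nu](w) = g(\nu,w) = 0$. Thus $\tilde\nu$ and $\tilde C\nu$ are parallel, say $\tilde\nu = c\,\tilde C\nu$ with $c>0$ by the consistent orientation. Normalizing via $\tilde g(\tilde\nu,\tilde\nu)=1$ together with the identity $\tilde g(\tilde C\nu,\tilde C\nu)=g(\nu,\tilde C\nu)$ — the exact analogue of the computation displayed for $\overline C$ just before the lemma — forces $c = g(\nu,\tilde C\nu)^{-1/2}$. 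Substituting back gives $g(\nu,\tilde\nu) = c\,g(\nu,\tilde C\nu) = g(\nu,\tilde C\nu)^{1/2}$, which combined with the first equality completes the proof.

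The main obstacle, and really the only nonroutine point, is the transversality observation in the first step; once the interior product is known to see only the $\nu$-component, the rest is linear algebra identical in spirit to the $\overline C$ computation already carried out in the text. I would take care that all identities are understood after pullback to $\Gamma$, and that the orientation convention consistently fixes the positive square root throughout.
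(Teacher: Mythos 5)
Your proposal is correct and takes essentially the same route as the paper's proof: decompose $\tilde\nu$ into its $\nu$-component plus a tangential part that vanishes under the interior product restricted to $\Gamma$, then identify $\tilde\nu$ as the consistently oriented, $\tilde g$-normalized multiple of $\tilde C\nu$ via the identity $\tilde g(\tilde C\nu,\tilde C\nu) = g(\nu,\tilde C\nu)$. The only difference is that you spell out the first equality (the linear-dependence argument showing the tangential component contributes nothing), which the paper dismisses as trivial.
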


\begin{proof}
The first equality is trivial, because we may represent $\tilde\nu$ as the linear combination of $g(\nu, \tilde \nu)\nu$ and its projection onto $T_p\Gamma$. But the latter does not contribute to the result.
It remains to show $g(\nu,\tilde\nu)=g(\nu,\tilde C\nu)^{1/2}$. To this end, we show that $\tilde\nu = g(\nu,\tilde C\nu)^{-1/2}\,\tilde C\nu$. First, observe that $\tilde C \nu$ is $\tilde g$-normal to $T_p\Gamma$, since for any $v\in T_p\Gamma$ we have
\[
\tilde g(\tilde C \nu, v) = (\tilde g \tilde g^{-1} g \nu)(v) = g(\nu,v) = 0\,.
\]
Now, $\norm{\tilde C \nu}_{\tilde g}^2 = \tilde g\left(\tilde C \nu, \tilde C\nu\right) = g\left(\nu,\tilde C\nu\right)$,
which means that $g(\nu,\tilde C\nu)^{-1/2}\,\tilde C\nu$ is also $\tilde g$-normalized. Finally,  $g(\nu,\tilde C\nu)^{-1/2}\,\tilde C\nu = \tilde\nu$
necessarily as they share the same orientation: $g(\nu, \tilde \nu) = \tilde g (\tilde C \nu, \tilde C \nu) > 0$.
\end{proof}

Applying \cref{lemma:surfacechange} to the metrics $g$ and $g_t$, we obtain
\begin{equation}\label{timetarea}
\dd A_t  = \sqrt{g(\nu,C_t\nu)}\,\dd A\,,
\end{equation}
and for $g$ and $\overline g$,
\begin{equation}\label{samix}
\dd\overline{A} = \sqrt{g\left(\nu,\overline C\nu\right)}\,\dd A\,.
\end{equation}

By combining \cref{lemma:surfacechange} with \cref{cor:eq6}, we obtain the approximation result for accumulated diffusive flux through boundaries of full-dimensional material submanifolds.

\begin{corollary}[{\cite[eq.~(6)]{Haller2018}\footnote{Recall that $C_t = g_t^{-1}g$, where $g$ is here the Euclidean metric on the flat state space, and corresponds to the transport tensor in \cite{Haller2018,Haller2020}; and $\nu$ is the outward-pointing $g$-unit normal vector field on $\partial S$. In \cite{Haller2018}, material surfaces are considered that are not necessarily the boundary of a full-dimensional set. In case they are, \cite[Eq.~(6)]{Haller2018} measures the \emph{influx}, which explains the opposite sign to ours. They also require weaker technical assumptions and obtain a slightly weaker result than that $O(\varepsilon^2)$ error appearing here. }}]\label{cor:Hallerflux}
Let $S\subset M$ be a full-dimensional submanifold with smooth boundary, and $u_0$ an admissible initial condition. Then
\[
T_0^1(S, u_0) = -\varepsilon\int_0^1\int_{\partial S} \dd u_0(C_t \nu)\,\dd A\,\dd t +O(\varepsilon^2)\,.
\]
\end{corollary}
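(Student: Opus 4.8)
The plan is to combine the $L^\infty$-expansion of the transport functional from \cref{cor:eq6} with the geometric surface-area identities established via \cref{lemma:surfacechange}. By \cref{cor:eq6}, we already have
\[
T_0^1(S,u_0) = -\varepsilon\int_S \dLap u_0\,\mass + O(\varepsilon^2)\,,
\]
so the entire task reduces to rewriting the leading-order term $\int_S \dLap u_0\,\mass$ as the claimed boundary integral. The key is to unfold the definition $\dLap = \int_0^1 \Delta_t\,\dd t$ and to apply the divergence theorem time-slice by time-slice, before reassembling the time integral.

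First I would interchange the spatial integral with the time average, using Fubini (justified since $u_0$ is smooth and $S$ compact), to obtain
\[
\int_S \dLap u_0\,\mass = \int_0^1\!\!\int_S \Delta_t u_0\,\mass\,\dd t\,,
\]
where $\Delta_t u_0 = \divergence(g_t^{-1}\,\dd u_0)$ with the divergence taken with respect to the fixed volume form $\mass$. For each fixed $t$, I would apply the divergence theorem on the full-dimensional submanifold $S$. Since $\divergence$ here is $\divergence_\mass$, the natural surface measure produced by Stokes/divergence is precisely the mass-induced area form; the outward flux of the vector field $g_t^{-1}\dd u_0$ through $\partial S$ is
\[
\int_S \Delta_t u_0\,\mass = \int_{\partial S}\bigl(g_t^{-1}\,\dd u_0\bigr)\,\lrcorner\,\mass\,,
\]
which I would rewrite as an integral against $\dd A_t = \iota_{\nu_t}\mass$ with $\nu_t$ the $g_t$-unit normal. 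Concretely, the flux contracts $g_t^{-1}\dd u_0$ with the mass form, and expressing this through the $g_t$-normal direction yields $\int_{\partial S}\dd u_0(\nu_t)\,\dd A_t$ — the gradient term is $g_t^{-1}\dd u_0$, whose $g_t$-inner-product with $\nu_t$ recovers $\dd u_0(\nu_t)$.

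The main obstacle — and the step where \cref{lemma:surfacechange} does the real work — is converting this time-dependent expression, phrased in the $t$-dependent geometry $(g_t,\nu_t,\dd A_t)$, into the single reference geometry $(g,\nu,\dd A)$ so that the time integral can be taken cleanly against a fixed area form. Applying \cref{lemma:surfacechange} (equivalently \cref{timetarea}) gives $\dd A_t = \sqrt{g(\nu,C_t\nu)}\,\dd A$ and transforms the $g_t$-normal $\nu_t$ via $\nu_t = g(\nu,C_t\nu)^{-1/2}C_t\nu$. The careful point is that the two normalization factors combine: the product $\dd u_0(\nu_t)\,\dd A_t$ must collapse to $\dd u_0(C_t\nu)\,\dd A$, with the square-root factors canceling exactly. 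I would verify this by substituting $\nu_t = g(\nu,C_t\nu)^{-1/2}C_t\nu$ into $\dd u_0(\nu_t)$, multiplying by $\sqrt{g(\nu,C_t\nu)}$ from $\dd A_t$, and checking the cancellation; this is where one must be attentive to whether the gradient is taken with respect to $g_t$ or $g$, and to the precise identification $C_t = g_t^{-1}g$. Once this pointwise identity $\dd u_0(\nu_t)\,\dd A_t = \dd u_0(C_t\nu)\,\dd A$ holds on $\partial S$, I would integrate over $\partial S$, then over $t\in[0,1]$, and collect the overall minus sign to arrive at
\[
T_0^1(S,u_0) = -\varepsilon\int_0^1\!\!\int_{\partial S}\dd u_0(C_t\nu)\,\dd A\,\dd t + O(\varepsilon^2)\,,
\]
which is exactly the claim. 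The $O(\varepsilon^2)$ remainder is inherited verbatim from \cref{cor:eq6} and requires no further estimation.
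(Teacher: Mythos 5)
Your proposal is correct and matches the paper's own proof in all essentials: both rest on \cref{cor:eq6}, Fubini, the divergence theorem applied with respect to $\mass$, and the pointwise identity $\dd u_0(\nu_t)\,\dd A_t = \dd u_0(C_t\nu)\,\dd A$ from \cref{lemma:surfacechange}. The only difference is cosmetic --- the paper transforms the boundary integral into $\int_S \dLap u_0\,\mass$ and then invokes \cref{cor:eq6}, whereas you run the same chain of identities in the opposite direction.
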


\begin{proof}
We calculate with \cref{lemma:surfacechange}
\begin{align*}
\varepsilon\int_0^1\int_{\partial S} \dd u_0(C_t \nu)\,\dd A\,\dd t &= \varepsilon\int_0^1\int_{\partial S} \dd u_0(\nu_t)\,g(\nu,C_t\nu)^{1/2}\,\dd A\,\dd t,\\
&= \varepsilon\int_0^1\int_{\partial S} \dd u_0(\nu_t)\,\dd A_t\,\dd t\,,\\
\intertext{and conclude with the divergence theorem and Fubini's theorem}
&= \varepsilon\int_0^1 \int_{S} \Delta_t u_0\,\mass\,\dd t = \varepsilon\int_{S}  \overline\Delta u_0\,\mass\,.
\end{align*}
The claim now follows from \cref{cor:eq6}.
\end{proof}

Using the transformation rules for normal vectors and surface forms from \cref{lemma:surfacechange} we can find the
representation of (the negative of) the leading-order total diffusive transport through a material boundary w.r.t.~an
arbitrary weighted manifold structure on the material manifold $(M,\tilde{g},\frac{\omega}{\dd\tilde{g}})$:
\begin{multline}\label{eq:arbitraryflux}
-\lim\limits_{\varepsilon \rightarrow 0}\tfrac{1}{\varepsilon}T_0^1(S, u_0) = -\overline T_0^1(S, u_0) = \int_0^1\int_{\partial S} \dd u_0(C_t \nu)\,\dd A\,\dd t = \\
\int_{\partial S} \dd u_0(\overline C \nu)\,\dd A = 
\int_{\partial S} \dd u_0(\overline C \tilde C^{-1}\tilde\nu)\,\dd \tilde A = \\
\int_{\partial S} \dd u_0(H\tilde\nu)\,\dd\tilde A = \int_{\partial S} \tilde g(\grad_{\tilde g}u_0,H\tilde\nu)\,\dd \tilde A\,,
\end{multline}
where $H=\overline g^{-1}\tilde{g}$ as claimed in \cref{eq:genericflux}.

\subsection{Relations to other dynamic surface areas}
\label{sec:surfaceconnection}

On a surface $\Gamma\subset M$ with $g$-unit normal vector field $\nu$, we compute
\begin{multline*}
\dd \overline A =  \sqrt{g\left(\nu, \overline C \nu\right)}\,\dd A =\\
= g\left(\nu, \left(\int_{\mathcal I} C_t \,\dd t\right)\nu\right)^{1/2}\,\dd A = \left({\int_{\mathcal I} g(\nu,C_t\nu)\,\dd t }\right)^{1/2}\,\dd A\,.
\end{multline*}
Plugging in \cref{timetarea} gives:
\[
\dd\overline A = \left({\int_{\mathcal I} \left(\frac{\dd A_{t}}{\dd A}\right)^2\,\dd t }\right)^{1/2}\,\dd A\,.
\]
This shows that the density of the surface element in the geometry of mixing w.r.t.~$\dd A$
is an $L^2$-average of the densities of the time-$t$ surface elements. Relating this with the interpretation in terms of diffusive transport, this is
consistent
with the observation made in \cite[Sect.~III.A]{Fyrillas2007}, ``that the rate of mass transport from an element of a material interface is related to the square of the relative change of the surface area''.

\begin{proposition}[Comparison to averages of surface areas]\label{trineq}
Let $\Gamma$ be a compact surface, and $\dd\overline A(\Gamma)$ and $\dd A_t(\Gamma)$ be its surface area as measured by $d\overline A$ and $\dd A_t$, respectively; i.e.,
\begin{align*}
\dd\overline A(\Gamma) &= \int_{\Gamma}\,\dd\overline A, & \dd A_t(\Gamma) &= \int_{\Gamma}\,\dd A_t.
\end{align*}
Then
\[
\dd\overline A(\Gamma) \geq \left(\int_{\mathcal I} \dd A_t(\Gamma)^2\,\dd t\right)^{\frac{1}{2}}\geq \int_{\mathcal I}\,\dd A_t(\Gamma)\,\dd t \eqqcolon \overline{\dd A_t (\Gamma)}\,.
\]
\end{proposition}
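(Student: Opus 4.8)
The plan is to reduce both inequalities to two classical functional inequalities, using the pointwise density identity established immediately before the statement. Writing $f_t \coloneqq \dd A_t/\dd A$ for the positive density of the time-$t$ surface form relative to the reference form, that identity reads $\dd\overline A = \left(\int_{\mathcal I} f_t^2\,\dd t\right)^{1/2}\dd A$, whereas $\dd A_t = f_t\,\dd A$ and hence $\dd A_t(\Gamma) = \int_\Gamma f_t\,\dd A$. With this notation the left, middle and right members of the claim are, respectively, $\int_\Gamma\left(\int_{\mathcal I} f_t^2\,\dd t\right)^{1/2}\dd A$, then $\left(\int_{\mathcal I}\left(\int_\Gamma f_t\,\dd A\right)^2\dd t\right)^{1/2}$, and finally $\int_{\mathcal I}\int_\Gamma f_t\,\dd A\,\dd t$.

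For the first (leftmost $\geq$) inequality I would invoke Minkowski's integral inequality with exponent $p=2$ on the product space $\Gamma\times\mathcal I$ equipped with the measures $\dd A$ and $\dd t$. Orienting the inequality so that the $\Gamma$-integration is the inner one and the $L^2(\mathcal I)$-norm sits on the outside, it states exactly that $\left(\int_{\mathcal I}\left(\int_\Gamma f_t\,\dd A\right)^2\dd t\right)^{1/2}\leq \int_\Gamma\left(\int_{\mathcal I} f_t^2\,\dd t\right)^{1/2}\dd A = \dd\overline A(\Gamma)$. (Equivalently, this is just the triangle inequality in $L^2(\mathcal I)$ applied to the family $t\mapsto f_t(x)$ and integrated over $x\in\Gamma$.) For the second (rightmost $\geq$) inequality I would use that $(\mathcal I,\dd t)$ is a probability space, so that the $L^2(\mathcal I)$-norm dominates the $L^1(\mathcal I)$-norm; applied to the nonnegative function $t\mapsto\dd A_t(\Gamma)$ this is Jensen's inequality for the convex map $s\mapsto s^2$ (or Cauchy--Schwarz against the constant $1$) and yields $\left(\int_{\mathcal I}\dd A_t(\Gamma)^2\,\dd t\right)^{1/2}\geq \int_{\mathcal I}\dd A_t(\Gamma)\,\dd t$.

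There is no serious obstacle here; the two points that require attention are, first, checking that the orientation of Minkowski's inequality matches the claim --- the $\Gamma$-integration must be the inner one, so that the subadditive $L^2(\mathcal I)$-norm ends up on the larger side --- and, second, confirming that it is precisely the normalization of $\dd t$ to a probability measure (assumed at the start of \cref{sec:ldm}) that makes the second inequality hold, since on a non-normalized interval the $L^2$-versus-$L^1$ comparison would carry a factor of $\abs{\mathcal I}$. As a consistency check, both inequalities degenerate to equalities under natural conditions (the first when $f_t$ depends on position only through a $t$-independent profile, the second when $\dd A_t(\Gamma)$ is independent of $t$), matching the intuition that these chained bounds compare $L^2$- and $L^1$-type averages of the surface area over time.
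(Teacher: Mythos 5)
Your proposal is correct and follows essentially the same route as the paper: the first inequality via the triangle inequality for $L^2(\mathcal I)$-valued integrals (the paper phrases your Minkowski step as the Bochner-integral triangle inequality), and the second via Jensen/Cauchy--Schwarz using that $(\mathcal I,\dd t)$ is a probability space. Your explicit remarks on the orientation of Minkowski's inequality and on the normalization of $\dd t$ are accurate refinements of the paper's argument, not deviations from it.
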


\begin{proof}
For convenience, we denote $\xi(t,p) = \frac{\dd A_t}{\dd A}(p)$ and compute
\begin{align}
\dd\overline A(\Gamma) &= \int_\Gamma \left(\int_{\mathcal I} \xi(t,p)^2\,\dd t\right)^{1/2} \,\dd A(p)
= \int_\Gamma \norm{\xi(\cdot,p)}_{L^2(\mathcal I)}\,\dd A(p)\nonumber\\
&\geq \norm{\int_\Gamma \xi(\cdot,p)\,\dd A(p)}_{L^2(\mathcal I)}\label{eq:prejensen}\\
&= \left(\int_{\mathcal I} \left(\int_\Gamma \xi(t,p)\,\dd A(p)\right)^2\,\dd t\right)^{1/2}
= \left(\int_{\mathcal I} \dd A_t(\Gamma)^2\,\dd t\right)^{1/2}\,,\nonumber
\end{align}
where \cref{eq:prejensen} is the triangle inequality for Banach-space valued maps (e.g. for the Bochner integral see \cite[Sect.~VI]{Lang1993}). The second claimed inequality is a direct consequence of Jensen's inequality applied to the expression in \cref{eq:prejensen}.
\end{proof}

Notably, $\overline{\dd A_t (\Gamma)}$ appears in the definition of the \emph{dynamic Cheeger constant} in \cite[Eq.~(4)]{Froyland2015a}.
Moreover, by means of the Cheeger inequality for weighted manifolds, \cref{prop:weightedcheeger} in \cref{sec:weightedcheeger}, we may strengthen the dynamic Cheeger inequality \cite[Thm.~3.2]{Froyland2015a}, where it was shown that
\[
\inf_{\Gamma}\frac{\overline{\dd A_t (\Gamma)})}{\min\{\mass(M_1),\mass(M_2)\}} \leq 2\sqrt{-\lambda_2}\,,
\]
for the case that $\omega = \dd g_t$ for all $t \in \mathcal I$. In this case, $\dd A_t$ is the $g_t$-Riemannian area.
Flat Riemannian manifolds were considered in \cite{Froyland2015a}, an extension to more general geometries was made in \cite{Froyland2017}.

\begin{corollary}[Strong dynamic Cheeger inequality]\label{cor:strong_Cheeger}
It holds
\[
\inf_{\Gamma}\frac{\overline{\dd A_t (\Gamma)})}{\min\{\mass(M_1),\mass(M_2)\}} \leq \inf_{\Gamma}\frac{\dd\overline A(\Gamma)}{\min\{\mass(M_1),\mass(M_2)\}} \leq 2\sqrt{-\lambda_2}\,,
\]
where $\inf_{\Gamma}$ denotes the infimum over all dividing surfaces $\Gamma$ that split $M$ into two sets $M_1$ and $M_2$, and $\lambda_2<0$ is the first non-trivial eigenvalue of $\dLap$.
\end{corollary}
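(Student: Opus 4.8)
The plan is to separate the chained inequality into its two halves and treat each independently: the left-hand inequality compares the two Cheeger-type constants built from $\overline{\dd A_t}$ and $\dd\overline A$, and follows directly from \cref{trineq}; the right-hand inequality is the Cheeger inequality for weighted manifolds, \cref{prop:weightedcheeger}, applied to the geometry of mixing $(M,\overline g,\theta)$.

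For the left inequality, I would fix an arbitrary dividing surface $\Gamma$ splitting $M$ into $M_1,M_2$. \cref{trineq} yields $\dd\overline A(\Gamma)\ge\overline{\dd A_t(\Gamma)}$. Since the denominator $\min\{\mass(M_1),\mass(M_2)\}$ is positive and does not depend on the choice of area form, dividing preserves the inequality. As this holds for every dividing surface $\Gamma$, I take the infimum on both sides, invoking the elementary monotonicity of the infimum (if $f\le h$ pointwise then $\inf f\le\inf h$), to obtain the first inequality.

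For the right inequality, the key input is the identification (the lemma of \cite{Karrasch2020c} recalled above) of $\dLap$ as the Laplace operator of the weighted manifold $(M,\overline g,\theta)$, where $\theta\,\dd\overline g=\mass$. Consequently $\lambda_2$ is the first nontrivial eigenvalue of the weighted Laplacian on the geometry of mixing, the weighted volume measure is exactly $\mass$, and, by \cref{eq:smoothareasurfacetransport}, the induced weighted surface area on $\Gamma$ is $\dd\overline A=\theta\,\dd A_{\overline g}$. Thus the geometric Cheeger constant of $(M,\overline g,\theta)$ is precisely $\inf_{\Gamma}\dd\overline A(\Gamma)/\min\{\mass(M_1),\mass(M_2)\}$, and the weighted Cheeger inequality \cref{prop:weightedcheeger} bounds it from above by $2\sqrt{-\lambda_2}$. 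Chaining the two inequalities then gives the claim.

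The only genuine subtlety I anticipate is the dictionary between the abstract statement of \cref{prop:weightedcheeger} and the concrete objects here: one must verify that the weighted volume appearing in the weighted Cheeger constant is the flow-invariant mass form $\mass$ (rather than $\dd\overline g$), and that the weighted perimeter is $\dd\overline A$ rather than the bare Riemannian area $\dd A_{\overline g}$. Both follow from the defining relation $\theta\,\dd\overline g=\mass$ together with \cref{eq:time_t_area,eq:smoothareasurfacetransport}. Once this matching of conventions is secured, the remainder is bookkeeping and the left inequality is immediate from \cref{trineq}; I therefore expect no substantial difficulty beyond this identification.
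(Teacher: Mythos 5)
Your proposal is correct and takes essentially the same route as the paper's own (one-line) proof: the left inequality from \cref{trineq}, and the right from \cref{prop:weightedcheeger} after identifying $\inf_\Gamma \dd\overline A(\Gamma)/\min\set{\mass(M_1),\mass(M_2)}$ as the Cheeger constant of the geometry of mixing $(M,\overline g,\theta)$. The extra bookkeeping you supply---monotonicity of the infimum and the dictionary $\theta\,\dd\overline g=\mass$, $\dd\overline A=\theta\,\dd A_{\overline g}$---is exactly what the paper leaves implicit, and it is verified correctly.
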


\begin{proof}
The first estimate follows from \cref{trineq}, the second from \cref{prop:weightedcheeger}, since $\dd\overline A(\Gamma)/\min\set{\mass(M_1),\mass(M_2)}$ is the Cheeger constant for the geometry of mixing.
\end{proof}

\subsection{Relation to total Lagrangian diffusive transport}

The authors of \cite{Haller2018} establish the approximation of
the total diffusive flux as in \cref{cor:eq6}
in order to define a measure of diffusive permeability for a generic material surface $\Gamma$.
Here, the (``diffusive transport'') response $\overline T_0^1(\Gamma, u_0)$ to a
``diffusion stress'' given by some virtual initial condition $u_0$---of which $\Gamma$
is supposed to be a level set---is computed. As a consequence, the gradient of $u_0$
along $\Gamma$ is normal to $\Gamma$.

To make this construction comparable
among different surfaces, they require that the norm is uniformly constant
along the entire $\Gamma$, which specifies $u_0$ in a neighborhood of $\Gamma$
to first order. It remains to choose a norm w.r.t.~which to measure
the gradient and thereby require constancy. Since the response depends
linearly on this constant in the stress, one may take this constant to be equal to $1$
without loss of generality. The requirement on $u_0$ then reads as $\grad_gu_0=\nu$, with $\nu$ the $g$-unit normal along $\Gamma$. We set
\begin{equation}\label{eq:transport_g}
\overline T_0^1(\Gamma; g) \coloneqq  -\int_\Gamma \mathrm g(\nu ,\overline C \nu) \, \dd A \,,
\end{equation}
where, notationally, we replace the dependence of $\overline T_0^1$ on $u_0$ by a
dependence on the metric $g$ which determines (i) the gradient of $u_0$; (ii) the unit normal
vector; and (iii) the area element $\dd A$. By \cref{cor:Hallerflux}, the previous
definition equals the leading-order coefficient of $T_0^1(S, u_0)$ in the case that
$\partial S = \Gamma$ and $u_0$ is chosen as described above.

In \cite{Haller2018}, the reference metric $g$ is chosen as the one induced by the
initial spatial configuration of the fluid. For this choice, the norm of the gradient of
$u_0$ is constant as measured in the spatial metric.
This choice suggests itself, but is by no means natural. For instance, if at the initial
time instance the diffusion is not spatially homogeneous (along $\Gamma$), a
$u_0$ chosen with constant gradient measured w.r.t.~$g$ may have non-constant
gradient w.r.t.~$g_0$, the initial, diffusion-adapted metric. As a consequence, it will
have non-constant instantaneous diffusive flux, which puts different diffusion stress
on different subsets of $\Gamma$, and hence makes them incomparable.

Alternatively, one could argue that the gradient should be measured in the
``effective'' diffusion-adapted norm $\overline g$, the norm in the geometry of
mixing, and request uniform constancy w.r.t.~this norm; i.e.~ $\grad_{\overline g} u_0 = \overline\nu$ .
The diffusive transport represented in the geometry of mixing ($g=\overline g$), where $H$ is the identity (see \cref{eq:arbitraryflux}), reduces to
\begin{equation}\label{eq:negsamixing}
\overline{T}_0^1(\Gamma;\,\overline g) = -\int_{\Gamma}\overline g(\overline \nu,\overline \nu)\,\dd \overline A = - \int_{\Gamma} \dd \overline A\,,
\end{equation}
the (negative of the) surface area of $\Gamma$ in the geometry of mixing.
For comparison, we represent the surface area in the the geometry of the initial
configuration using \cref{lemma:surfacechange}, and obtain
\begin{equation}\label{eq:sa_mixing_onceagain}
\overline{T}_0^1(\Gamma;\,\overline g) = -\int_{\Gamma} \sqrt{g(\nu, \overline C \nu)}\,\dd A\,.
\end{equation}
and find that the different uniformization choices for (the gradient of) $u_0$ lead to
integrands that are the square and square root of each other, respectively.

Noticeably, within the $\overline{T}_0^1(\Gamma;\,\overline g)$ setting, the
problem of finding closed material surfaces that minimize leading-order diffusive
transport normalized by the enclosed fluid mass is exactly the isoperimetric problem
posed in the geometry of mixing; cf.~\cite{Froyland2015a,Froyland2017} for a
related but different approach (recall also \cref{cor:strong_Cheeger}, and the surrounding discussion).

\section{Conclusions}

In the above, we have investigated the $O(\varepsilon)$ asymptotics of
finite-time, time-dependent heat flow on manifolds as the diffusivity $\varepsilon$ goes to zero.
Such time-dependent heat flows arise naturally when studying (possibly time-dependent) advection-diffusion
equations in Lagrangian coordinates.
When the initial concentration $u_0$ is smooth
with support compactly contained in $M$, the behaviour of the advection-diffusion equation in leading order is
described by the time-averaged heat equation or, equivalently, the heat flow in the geometry of mixing.

The advection-diffusion equation remains well-defined even with non-smooth initial data $u_0$.
In particular, it seems natural to investigate $T_0^1(S, \mathds{1}_S)$, the diffusive transport out
of a material set $S$ when the initial density is uniformly distributed on $S$.
The theory developed in this work does not apply to this quantity. Here, the leading order asymptotics
is no longer of order $\varepsilon$, as even in the autonomous heat flow context
$T_0^1(S, \mathds{1}_S)$ is of order $\varepsilon^{1/2}$; see, e.g., \cite{Vandenberg2015,Schilling2020}.
There, the leading-order coefficient is proportional to the surface area of the boundary of $S$.
In the time-dependent, finite-time heat flow case, a similar result can be shown, where the relevant
surface area is the one in the geometry of mixing. This will be published in forthcoming work.

\subsection*{Acknowledgements}
This work is supported by the Priority Programme
SPP 1881 Turbulent Superstructures of the Deutsche Forschungsgemeinschaft.
We would like to thank Alvaro de Diego and Oliver Junge for fruitful discussions.

\appendix

\section{Differential geometric preliminaries}\label{sec:diffgeo}

In this section, we briefly recall some fundamental concepts from differential geometry and fix our notation. General references
include \cite{Lee2013,Grigoryan2009}. Throughout, let $M$ be a smooth,
oriented, compact manifold of dimension $\dim M=n$, possibly with smooth boundary.

A \emph{(Riemannian) metric $g$} on $M$ is a symmetric, positive-definite, contravariant
tensor field of rank 2, i.e., $g\colon TM\times TM\to\R$. For any tangent vector
$v \in T_xM$, a metric $g$ induces a linear form $g_x(v,\cdot)$ on $T_xM$.
Correspondingly, for any vector field $v$, the metric $g$ induces a one-form on $M$.
With the \emph{contraction operation}/\emph{interior multiplication} on forms, denoted
by $\iota$, i.e.,
\[
(\iota_F \alpha) (v_1,\dots,v_{k-1}) = \alpha(F,v_1,\dots,v_{k-1})\,,
\]
for any $k$-form $\alpha$, the induced one-form can be written as $w = \iota_v g$.
Henceforth, we identify a metric $g$ with its interpretation as the linear transformation
$g_x\colon T_xM \rightarrow T_x^*M$, $v \mapsto\iota_v g_x$, often referred to as the
\emph{canonical/musical isomorphism} between $T_xM$ and $T_x^*M$. Moreover, we will
often suppress the subscript $x$ and regard $g$ as a vector-bundle morphism
$g\colon TM \rightarrow T^*M$; cf., for instance, \cite{Lang1995}.

Non-degeneracy of $g$ implies its invertibility, and we may interpret its inverse $g^{-1}\colon T^*M\to TM$ by a similar identification as above
with a symmetric, positive-definite, covariant tensor field of rank 2, i.e., $g^{-1}\colon T^*M\times T^*M \to\R$.
This can be interpreted as an inner product on one-forms, and is known in the literature as the \emph{dual metric (to $g$)}.

With this notation, the gradient (induced by $g$) $\grad_g f$ is defined as the vector field  (a section of $TM$) obtained from
transforming the one-form $\dd f$ by $g^{-1}\colon T^*M\to TM$,
\begin{equation}\label{eq:gradient}
\grad_g f = g^{-1}\,\dd f\,.
\end{equation}

For any volume form $\omega$ on $M$, the induced \emph{divergence} $\divergence_\omega$
of a smooth vector field $F\colon M \rightarrow TM$ is defined via
\[
\left(\divergence_{\omega} F\right) \omega \coloneqq d(\iota_F \omega) = \mathcal L_V \omega,
\]
where $\divergence_{\omega} F \in C^\infty(M)$, and $\mathcal L$ is the Lie-derivative.

The induced \emph{(Riemannian) volume element} is the unique volume form, denoted by
$\dd g$ (the $\dd$ here does \emph{not} refer to the exterior derivative we used before),
that returns 1 when applied to an oriented, orthonormal set of tangent vectors
$v_1,\ldots,v_n\in T_xM$. It holds that $\divergence_{\dd g}$ is the usual Riemannian divergence.

Next, let $\Gamma$ be an oriented codimension-1 surface in $M$. Then the metric $g$ induces
a \emph{surface element} $\dd A_g$ on $\Gamma$ via the volume element on $M$ as follows.
For given, oriented linearly independent $v_1,\dots v_{n-1} \in T_p\Gamma$, let $\nu_g \perp_g \spn\{v_1, \dots, v_{n-1}\}$ with $\norm{\nu_g}_g = 1$ be such that $(\nu_g,v_1,\ldots,v_{n-1})$ is positively oriented in $M$. We call such $\nu$ \emph{the} unit normal vector to $\Gamma$ at $p$. Then the action of the surface element is given by
\begin{equation}\label{eq:areaform}
\dd A_g(v_1,\dots,v_{n-1}) = \dd g(\nu, v_1, \dots, v_{n-1}),\qquad v_1,\dots,v_{n-1}\in T_p\Gamma\,.
\end{equation}
Intuitively, $(v_1,\ldots,v_{n-1})$ span a parallelepiped of area 1 if, when expanded by the unit normal $\nu$,
the resulting parallelepiped has volume 1. By construction, a surface $\Gamma$ has non-negative \emph{surface area}
\[
\dd A_g(\Gamma) \coloneqq \int_{\Gamma}\,\dd A_g\,.
\]
The surface element $\dd A_g$ is a top-degree form on $\Gamma$ and can be hence regarded as the volume element there.

A natural differential operator on Riemannian manifolds is the \emph{Laplace-Beltrami operator} $\Delta_g$ defined as
\[
\Delta_{g} \coloneqq \divergence_{\dd g} \circ \grad_g.
\]

It will turn out that for an elegant description and study of a suitably general class of advection-diffusion processes,
\emph{weighted manifolds} (also known as \emph{manifolds with density} \cite{Morgan2005}) are very
helpful. A weighted manifold $(M, g, \theta)$ is a Riemannian manifold $(M, g)$, on which the volume
form and---as a consequence---the induced surface area forms are weighted by a (strictly)
positive smooth function $\theta\colon M \rightarrow \R$ w.r.t.~the canonical volume $\dd g$ or surface area
$\dd A_g$ forms \cite[Sect.~18.1]{Morgan2009b}.
For the induced surface area the same intuition and formalism applies: measure the volume of a higher-dimensional
parallelepiped as obtained by expansion with a suitably oriented unit normal vector, and the result is the area of the base parallelepiped.

On a weighted manifold $(M,g,\theta)$, the \emph{Laplace operator} $\Delta_{\theta,g}$ is defined analogously to the classic Riemannian case by
composition of the associated divergence and gradient,
\[
\Delta_{\theta,g} \coloneqq \divergence_{\theta\,\dd g}\circ\grad_g.
\]

\section{Cheeger inequality on weighted manifolds} \label{sec:weightedcheeger}

\begin{proposition}[Cheeger inequality for weighted manifolds]\label{prop:weightedcheeger}
Let $(M,g,\theta)$ be a compact weighted manifold with Laplace operator $\Delta$.
We denote the (weighted) volume form by $\mass \coloneqq \theta \dd g$, the (weighted) surface
measure by $\dd\overline A$, and the first nontrivial eigenvalue of $\Delta$ by $\lambda$.
Furthermore $\grad\coloneqq \grad_g$, and $\norm{\cdot} \coloneqq \norm{\cdot}_g$.
Then the Cheeger inequality holds:
\begin{equation}\label{cheegerinequality}
h\coloneqq \inf_{\Gamma\textnormal{ disconnects }M\textnormal{ into } M_1, M_2} \frac{\dd \overline A(\Gamma)}{\min\set{\mass(M_1), \mass(M_2)}} \leq 2\sqrt{-\lambda}\,.
\end{equation}
\end{proposition}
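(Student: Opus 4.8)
The plan is to run Cheeger's classical argument, carefully replacing the Riemannian volume and surface area throughout by their weighted counterparts $\mass = \theta\,\dd g$ and $\dd\overline A = \theta\,\dd A_g$. Since $\Delta = \divergence_{\mass}\circ\grad$ is self-adjoint and non-positive on $L^2(M,\mass)$, with the weighted Green identity $\int_M(\Delta f)\,w\,\mass = -\int_M g(\grad f,\grad w)\,\mass$ (the boundary term vanishing on a closed manifold, respectively under the Neumann condition), the first nontrivial eigenvalue has the variational characterization
\begin{equation*}
-\lambda = \inf\set*{\frac{\int_M\norm{\grad f}^2\,\mass}{\int_M f^2\,\mass} : \int_M f\,\mass = 0,\ f\not\equiv 0}\,.
\end{equation*}
Let $\phi$ be a corresponding eigenfunction. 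As $\int_M\phi\,\mass = 0$, the function $\phi$ changes sign, and since $\set{\phi>0}$ and $\set{\phi<0}$ are disjoint, at least one of them has $\mass$-measure at most $\tfrac12\mass(M)$; replacing $\phi$ by $-\phi$ if necessary, I assume $\mass(\set{\phi>0})\leq\tfrac12\mass(M)$ and set $u\coloneqq\max(\phi,0)$. Testing $\Delta\phi=\lambda\phi$ against $u$ and integrating by parts shows that $u$ inherits the optimal quotient, $\int_M\norm{\grad u}^2\,\mass=-\lambda\int_M u^2\,\mass$, so it remains to bound the Cheeger constant $h$ by twice the square root of the Rayleigh quotient of $u$.

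The central step is a co-area bridge from the Dirichlet energy of $u$ to the Cheeger constant. Applying the Riemannian co-area formula with weight $\theta$ to the non-negative function $v\coloneqq u^2$ gives
\begin{equation*}
\int_M\norm{\grad v}\,\mass=\int_0^\infty\dd\overline A(\set{v=t})\,\dd t\,.
\end{equation*}
For almost every $t>0$, Sard's theorem makes $t$ a regular value, so $\Gamma_t\coloneqq\set{v=t}$ is a smooth surface disconnecting $M$ into $\Omega_t\coloneqq\set{v>t}$ and its complement; since $\Omega_t\subset\set{\phi>0}$ we have $\mass(\Omega_t)\leq\tfrac12\mass(M)$, and the definition of $h$ yields $\dd\overline A(\Gamma_t)\geq h\,\mass(\Omega_t)$. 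Integrating over $t$ and invoking the layer-cake identity $\int_0^\infty\mass(\Omega_t)\,\dd t=\int_M v\,\mass=\int_M u^2\,\mass$ produces $\int_M\norm{\grad v}\,\mass\geq h\int_M u^2\,\mass$.

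To finish, I would estimate the left-hand side from above by Cauchy--Schwarz: as $u\geq 0$,
\begin{equation*}
\int_M\norm{\grad v}\,\mass=2\int_M u\,\norm{\grad u}\,\mass\leq 2\left(\int_M u^2\,\mass\right)^{1/2}\left(\int_M\norm{\grad u}^2\,\mass\right)^{1/2}\,.
\end{equation*}
Chaining this with the previous lower bound and dividing by $\int_M u^2\,\mass>0$ gives $h\leq 2\bigl(\int_M\norm{\grad u}^2\,\mass\big/\int_M u^2\,\mass\bigr)^{1/2}=2\sqrt{-\lambda}$, which is the asserted inequality.

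The main obstacle lies entirely in the co-area step: one must justify the weighted co-area formula --- which reduces to the standard Riemannian one applied with integrand $\theta$, identifying $\int_{\set{v=t}}\theta\,\dd A_g$ with $\dd\overline A(\set{v=t})$ --- and ensure that for almost every $t$ the superlevel set $\Omega_t$ is bounded by a smooth surface that genuinely disconnects $M$, so that the definition of $h$ applies. A minor point is the regularity bookkeeping needed to admit $u=\max(\phi,0)$ as an $H^1$ competitor and to integrate by parts against it, which is unproblematic since $\phi$ is smooth and its nodal set has measure zero.
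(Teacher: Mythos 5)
Your proof is correct and coincides with the paper's intended argument: the paper's proof of \cref{prop:weightedcheeger} simply cites Ledoux's presentation of the classical Cheeger inequality and asserts that it ``applies---with obvious modifications---to the weighted manifold case'', and those modifications are exactly the classical co-area/Cauchy--Schwarz argument with $\dd g$ and $\dd A_g$ replaced by $\mass=\theta\,\dd g$ and $\dd\overline A=\theta\,\dd A_g$ that you carry out. Your handling of the one genuine technical point (Sard for $v=u^2$, which is only $C^1$ globally but smooth on the open set $\set{\phi>0}$ containing every positive level set) is sound, so nothing further is needed.
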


\begin{proof}
The proof of the classical Chee\-ger inequality given in \cite{Ledoux1994} applies---with obvious
modifications---to the weighted manifold case.
\end{proof}

\section{Spectral convergence}\label{sec:spectrum_proofs}

Recall that we used the notation $\Bra{\cdot,\cdot}_0$ for the $L^2(M,\mass)$ scalar product and
$\Bra{\cdot,\cdot}_1$ for the $H^1(M,g,\mass)$ scalar product; furthermore, we introduced $g$
as some reference metric on $M$ and $\grad = \grad_g$. For later reference, we first prove
estimates on solutions.

\begin{lemma}[Uniform parabolicity]\label{lemma:uniform_parabolicity}
There exist constants $C_1,C_2 > 0$ independent of $t$ so that
$C_1\abs*{u}_1^2 \leq -\Bra{u, \Delta_t u }_{0} \leq C_2 \abs*{u}_1^2$ for all $u \in H^1_0(M,g,\mass)$. Moreover, for $u_1, u_2 \in H^1_0(M,g,\mass)$ it holds that $\Bra{u_1,\Delta_t u_2} \leq C_2 \abs*{u_1}_1\abs*{u_2}_1$.
\end{lemma}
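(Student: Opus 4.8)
The plan is to reduce the claimed operator bounds to a pointwise comparison of the dual metrics $g_t^{-1}$ and $g^{-1}$ on the cotangent bundle, and to extract uniformity in $t$ from compactness. First I would integrate by parts. For $u \in \cicm$, the definition $\Delta_t u = \divergence(g_t^{-1}\dd u)$ together with the divergence theorem for $\divergence = \divergence_\mass$ (the boundary term $\int_{\partial M} u\,\iota_{g_t^{-1}\dd u}\mass$ vanishing by the Dirichlet/compact-support condition built into $H^1_0(M,g,\mass)$) gives
\[
-\Bra{u, \Delta_t u}_0 = \int_M g_t^{-1}(\dd u, \dd u)\, \mass\,,
\]
and more generally $a_t(u_1,u_2) = \int_M g_t^{-1}(\dd u_1, \dd u_2)\,\mass$. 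Both sides are continuous in $|\cdot|_1$, so this representation extends by density to all of $H^1_0(M,g,\mass)$; meanwhile $|u|_1^2 = \int_M g^{-1}(\dd u,\dd u)\,\mass$ by definition. Thus the entire lemma becomes a comparison of the $\mass$-integrals of $g_t^{-1}(\dd u,\dd u)$ and $g^{-1}(\dd u,\dd u)$.

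The key step is a pointwise, uniform equivalence of the dual metrics: I claim there are constants $0 < C_1 \le C_2$, independent of $t$ and $x$, with
\[
C_1\, g^{-1}(\xi,\xi) \le g_t^{-1}(\xi,\xi) \le C_2\, g^{-1}(\xi,\xi) \quad\text{for all } \xi\in T_x^*M,\ x\in M,\ t\in[0,1].
\]
To establish this, I would restrict the continuous, strictly positive function $(t,x,\xi)\mapsto g_t^{-1}(\xi,\xi)$ to the product of $[0,1]$ with the $g^{-1}$-unit cosphere bundle $\{(x,\xi) : g^{-1}(\xi,\xi)=1\}$. Since $M$ is compact, this cosphere bundle is compact, hence so is its product with $[0,1]$; the function therefore attains a positive minimum $C_1$ and a finite maximum $C_2$ there. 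Homogeneity of degree two in $\xi$ then promotes these bounds from the cosphere bundle to all covectors. Integrating the pointwise inequalities against $\mass$ and invoking the two integral representations above yields $C_1|u|_1^2 \le -\Bra{u,\Delta_t u}_0 \le C_2|u|_1^2$.

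For the bilinear bound I would reuse the upper constant together with two Cauchy--Schwarz inequalities. Pointwise, $g_t^{-1}$ is an inner product on $T_x^*M$, so $\abs{g_t^{-1}(\dd u_1,\dd u_2)} \le g_t^{-1}(\dd u_1,\dd u_1)^{1/2}\,g_t^{-1}(\dd u_2,\dd u_2)^{1/2} \le C_2\, g^{-1}(\dd u_1,\dd u_1)^{1/2}\,g^{-1}(\dd u_2,\dd u_2)^{1/2}$; integrating over $M$ and applying Cauchy--Schwarz in $L^2(M,\mass)$ then gives $\Bra{u_1,\Delta_t u_2} = -a_t(u_1,u_2) \le C_2\,|u_1|_1|u_2|_1$.

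The routine-but-essential points to watch are the vanishing of the boundary term in the integration by parts (guaranteed by the Dirichlet setting encoded in $H^1_0$) and the density extension of the integral representation of $a_t$. The only genuine subtlety is securing the constants $C_1,C_2$ \emph{simultaneously} across all $t\in[0,1]$; I expect this to be the main obstacle, and the compactness of $[0,1]$ times the cosphere bundle is precisely what resolves it, converting the required $t$-uniformity into an extreme-value statement for a continuous function on a compact set.
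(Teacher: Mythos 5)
Your proposal is correct and follows exactly the route the paper gestures at in its one-line proof: uniform ellipticity of the smooth $t$-dependent family $\Delta_t$ in divergence form w.r.t.\ $\mass$, with uniformity in $t$ extracted from compactness of $[0,1]$ (and of the cosphere bundle). You have merely filled in the standard details---integration by parts with vanishing boundary term, the compactness/homogeneity argument for the constants, and Cauchy--Schwarz for the bilinear bound---all of which are sound.
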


\begin{proof}
This is well-known to follow directly from uniform ellipticity of the smooth, $t$-de\-pen\-dent family of operators $\Delta_t$, defined on the compact $[0,1]$, which are in divergence form w.r.t.~the volume form $\mass$.
\end{proof}

For reference, we state the following well-known result/proof.

\begin{lemma}[$L^2$ contractivity; {\cite[Sect.~7.1, Thm.~2]{Evans2010}}]\label{lemma:l2contr}
Let $u_0 \in L^2(M,\mass)$. Then $\norm{P^\varepsilon_t u_0}_{0} \leq \norm{u_0}_{0}$.
\end{lemma}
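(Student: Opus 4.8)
The plan is to prove the standard parabolic energy estimate: that $t \mapsto \norm{P^\varepsilon_t u_0}_0$ is non-increasing, which immediately gives the claimed contraction bound. Write $u(t) \coloneqq P^\varepsilon_t u_0$ for the solution of \cref{mlade} with homogeneous Dirichlet data. First I would reduce to smooth admissible initial data by a density argument, using the existence, uniqueness and regularity theory recalled in \cref{sec:parabolicpdes}, so that the manipulations below are justified; equivalently, one keeps $u_0 \in L^2(M,\mass)$ and works directly in the weak formulation with $u \in L^2([0,1];H^1_0(M,g,\mass))$ and $\partial_t u$ in the dual space.

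The core of the argument is to differentiate $t\mapsto \norm{u(t)}_0^2$ in time. By the absolute-continuity result \cite[Sect.~5.9, Thm.~3]{Evans2010} --- the very same tool invoked in Step 3 of the proof of \cref{thm:spectralconvergence} --- this map is absolutely continuous with
\[
\frac{\dd}{\dd t}\norm{u(t)}_0^2 = 2\Bra{u(t), \partial_t u(t)}_0 = 2\varepsilon\Bra{u(t), \Delta_t u(t)}_0 = -2\varepsilon\, a_t(u(t),u(t))
\]
for almost every $t\in[0,1]$, where in the last step I use the bilinear form $a_t(u,w) = -\Bra{u,\Delta_t w}$ introduced in \cref{sec:conv_sing_vals}. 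This identity already absorbs the integration by parts against $\mass$: since $\Delta_t = \divergence(g_t^{-1}\dd u)$ with $\divergence = \divergence_\mass$, and since the homogeneous Dirichlet condition annihilates the boundary term, one has $-\Bra{u,\Delta_t u}_0 = \int_M g_t^{-1}(\dd u, \dd u)\,\mass$, with no residual weight or boundary contribution.

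It then remains to invoke positivity. By \cref{lemma:uniform_parabolicity} we have $a_t(u,u) \geq C_1\abs{u}_1^2 \geq 0$ uniformly in $t$; equivalently, $g_t^{-1}(\dd u, \dd u) \geq 0$ pointwise because the dual metric $g_t^{-1}$ is positive-definite. Hence $\frac{\dd}{\dd t}\norm{u(t)}_0^2 \leq 0$, and integrating from $0$ to $t$ yields $\norm{u(t)}_0^2 \leq \norm{u_0}_0^2$, which is the assertion. The Neumann case is identical, the boundary term vanishing because the conormal flux $g_t^{-1}\dd u$ has no normal component on $\partial M$.

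I expect the only genuine subtlety to be the rigorous justification of the time differentiation for merely $L^2$ data, where one cannot naively differentiate under the integral sign; this is precisely the point handled by the weak-form identity $\frac{\dd}{\dd t}\norm{u}_0^2 = 2\Bra{u,\partial_t u}_0$ together with the density reduction above, and is the reason the statement is attributed to \cite[Sect.~7.1, Thm.~2]{Evans2010}. Everything else is a direct consequence of the positive-definiteness of the dual metric already established in \cref{lemma:uniform_parabolicity}.
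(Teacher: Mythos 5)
Your proposal is correct and takes essentially the same route as the paper: differentiate $t\mapsto\norm{P^\varepsilon_t u_0}_0^2$, conclude $\partial_t\norm{P^\varepsilon_t u_0}_0^2 = 2\varepsilon\Bra{P^\varepsilon_t u_0,\Delta_t P^\varepsilon_t u_0}_0\le 0$ from nonpositivity of $\Delta_t$ (equivalently, positivity of $a_t$), and justify the differentiation by the absolute continuity supplied by the Galerkin weak-solution theory of \cref{lemma:galerkin}, which is exactly the paper's appeal to \cite[Sect.~5.9, Thm.~3]{Evans2010}. Your extra density reduction and the Neumann remark are harmless elaborations beyond the paper's one-line argument.
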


\begin{proof}
To see this, note that
\[
\partial_t \norm{P^\varepsilon_t u}_{0}^2 = 2 \varepsilon \Bra{P^\varepsilon_t u, \Delta_t P^\varepsilon_t u}_{0} \leq 0\,,
\]
since $\Delta_t$ is non-positive. Absolute continuity of $t \mapsto \norm{P^\varepsilon_t u}_0^2$ is established in \cref{lemma:galerkin}, \cref{sec:parabolicpdes}.
\end{proof}

\begin{lemma}[{Uniform $H^1$ boundedness; cf.~\cite[Prop.~2(iii)]{Liu2004}}]\label{lemma:h11}
For $t \in [0,1]$ and $u_0 \in H^1_0$, we have $\abs*{P^\varepsilon_t u_0}_{1} \leq C_3 \abs*{u_0}_{1}$ for some constant $C_3$ that does not depend on $u_0$, $t$ or $\varepsilon$.
\end{lemma}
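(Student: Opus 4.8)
The plan is to run a standard parabolic energy estimate, exploiting the fact that the only $\varepsilon$-dependent term which arises carries a favourable sign and may simply be discarded. Write $u(t)\coloneqq P^\varepsilon_t u_0$, and recall from the discussion preceding \cref{thm:spectralconvergence} that $a_t(v,w) = -\Bra{v,\Delta_t w}_0 = \int_M g_t^{-1}(\dd v,\dd w)\,\mass$ is the $g_t$-Dirichlet energy, a symmetric, positive bilinear form on $H^1_0(M,g,\mass)$. I would monitor the energy $E(t)\coloneqq a_t(u(t),u(t))$, which by \cref{lemma:uniform_parabolicity} is comparable to $\abs{u(t)}_1^2$, namely $C_1\abs{u(t)}_1^2 \le E(t)\le C_2\abs{u(t)}_1^2$. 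Differentiating and separating the explicit $t$-dependence of the coefficients $g_t^{-1}$ from the dependence through $u(t)$ gives
\[
\frac{\dd}{\dd t}E(t) = \int_M (\partial_t g_t^{-1})(\dd u,\dd u)\,\mass + 2\,a_t(u,\partial_t u).
\]
Using $\partial_t u = \varepsilon\Delta_t u$ together with the symmetry of $a_t$, the last term equals $2a_t(\partial_t u, u) = -2\varepsilon\Bra{\Delta_t u,\Delta_t u}_0 = -2\varepsilon\norm{\Delta_t u}_0^2 \le 0$, so it may be dropped.

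Next I would absorb the remaining term back into $E$. Since $(g_t^{-1})_{t\in[0,1]}$ is a smooth family of positive-definite dual metrics on the compact manifold $M$, the tensor field $\partial_t g_t^{-1}$ is bounded uniformly in $(t,x)$, so there is a constant $K>0$, independent of $t$, with $(\partial_t g_t^{-1})(\xi,\xi)\le K\,g^{-1}(\xi,\xi)$ for all covectors $\xi$; integrating yields $\int_M(\partial_t g_t^{-1})(\dd u,\dd u)\,\mass \le K\abs{u}_1^2 \le (K/C_1)E(t)$. Hence $\frac{\dd}{\dd t}E(t)\le (K/C_1)E(t)$, and Grönwall's inequality gives $E(t)\le \mathrm e^{(K/C_1)t}E(0)\le \mathrm e^{K/C_1}E(0)$ for $t\in[0,1]$. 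Converting back via \cref{lemma:uniform_parabolicity},
\[
\abs{u(t)}_1^2 \le \tfrac{1}{C_1}E(t)\le \tfrac{C_2}{C_1}\,\mathrm e^{K/C_1}\abs{u_0}_1^2,
\]
which is the claim with $C_3 = (C_2/C_1)^{1/2}\mathrm e^{K/(2C_1)}$. The essential point is that $C_3$ is independent of $\varepsilon$: the sole $\varepsilon$-dependent contribution was the dissipative term $-2\varepsilon\norm{\Delta_t u}_0^2$, which we discarded, so that the growth rate $K/C_1$ stems purely from the time-variation of the metric.

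The main obstacle is one of rigor rather than of idea: the computation above presumes enough regularity to differentiate $E$ and to integrate by parts in $2a_t(u,\partial_t u)$, i.e.\ that $u(t)$ lies in the domain of $\Delta_t$ with $\Delta_t u(t)\in L^2$, which is not immediate at $t=0$ for merely $H^1_0$ initial data. I would therefore carry out the estimate on the finite-dimensional Galerkin approximations of \cref{lemma:galerkin} (\cref{sec:parabolicpdes}), for which all quantities are smooth in $t$ and the chain rule and symmetric integration by parts are elementary; the constants $C_1$, $C_2$, $K$ above are independent of the Galerkin dimension, $t$, and $\varepsilon$, so the bound survives passage to the weak $H^1$-limit by lower semicontinuity of the norm. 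Alternatively, one may invoke interior parabolic regularity to justify the manipulations for $t>0$ and then let the base time tend to $0$, using strong continuity of $t\mapsto u(t)$ in $H^1_0$. Either route reproduces the stated $\varepsilon$-, $t$-, and $u_0$-uniform bound, which is precisely the content of \cite[Prop.~2(iii)]{Liu2004}.
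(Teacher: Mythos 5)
Your proof is correct and is essentially the paper's argument: your energy $E(t)=a_t(u(t),u(t))$ is exactly the quantity $f^\varepsilon(t)=-\Bra{u_\varepsilon(t),\Delta_t u_\varepsilon(t)}_0$ the paper monitors, and both proofs discard the favourably-signed term $-2\varepsilon\norm{\Delta_t u}_0^2$, bound the $\partial_t(g_t^{-1})$ contribution by a $t$-uniform constant times $\abs{u}_1^2$ as in \cref{lemma:uniform_parabolicity}, and close with Grönwall, yielding the same constant $C_3=(C_2/C_1)^{1/2}\mathrm{e}^{C/(2C_1)}$. The only (minor, technical) divergence is the regularity justification: the paper first assumes $u_0$ in the domain of $\Delta_0$ so that \cref{lemma:regularity1} licenses the differentiation, then extends by density of that domain in $H^1_0$, whereas your Galerkin-level estimate with passage to the weak limit is an equally standard alternative (your second alternative, invoking strong $H^1_0$-continuity of $t\mapsto u(t)$ at $t=0$, is the one route not backed by the paper's appendix and would need separate justification).
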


\begin{proof}
Our proof conceptually closely follows \cite[App.~B]{Liu2004}, which is given in Eulerian coordinates, and therefore takes a
seemingly different form because of the presence of the advection term in the evolution PDE.

We start with the case that $u_0$ is in the domain of $\Delta_0$. By uniform parabolicity it suffices to find bounds on $f^\varepsilon(t) \coloneqq -\Bra{u_\varepsilon(t), \Delta_t u_\varepsilon(t)}_{0}$.
Using \cref{lemma:regularity1}, see see that $f^\varepsilon(t)$ is absolutely continuous,
and moreover
\begin{align*}
\partial_t \Bra{ u_\varepsilon(t), \Delta_t u_\varepsilon(t)}_{0} &=2 \varepsilon  \Bra{\Delta_t u_\varepsilon(t), \Delta_t u_\varepsilon(t) }_{0} + \Bra{u_\varepsilon(t), \partial_t(\Delta_t) u_\varepsilon(t)}_{0}\\
&\geq \Bra{u_\varepsilon(t), \partial_t(\Delta_t) u_\varepsilon(t)}_0\,.
\end{align*}
The operator $\partial_t(\Delta_t)$ is given via its action on $u \in C^\infty(M)$ as
\begin{align*}
\partial_t(\Delta_t )u = \mathrm{div}_{\mass}(\partial_t (g_t^{-1}) \dd u )\,,
\end{align*}
recalling that $\Delta_t u = \mathrm{div}_{\mass}(g_t^{-1} \dd u)$ is the action of $\Delta_t$. Hence,
$\partial_t(\Delta_t )$ is a well-defined second-order partial differential operator with smooth coefficients.
Arguments as in the proof of \cref{lemma:uniform_parabolicity} yield that
\begin{align*}
C\coloneqq \sup\limits_{u \in H^1_0(M,\omega,g)}|\Bra{\partial_t(\Delta_t) u , u}_0|/|u|_{1}^2 < \infty\,.
\end{align*}
Therefore
\begin{align*}
-\partial_t \Bra{u_\varepsilon(t), \Delta_t u_\varepsilon(t)}_0 &\leq C|{u_\varepsilon(t)}|_1^2\,.
\end{align*}
Due to uniform parabolicity we have that $\abs{u_\varepsilon(t)}_1^2 \leq C_1^{-1}  f^\varepsilon(t)$.
Hence by Grönwall's lemma (\cite[Appendix B.2]{Evans2010}), $f^\varepsilon(t) \leq\mathrm{e}^{CC_1^{-1}} f^\varepsilon (0)$, which finishes the proof.
Since the domain of $\Delta_0$ is dense in $H^1_0$, the general result is a consequence of this special case (using the results in \cref{sec:parabolicpdes}).
\end{proof}

Recall the well-known fact that the $L^2$-adjoint of $P^\varepsilon$ is the time-$1$ solution
operator associated to the Lagrangian advection-diffusion equation with the same Dirichlet boundary
conditions, but with reversed time\footnote{See, for example, the proof of
\cite[Prop.~2.9]{Acquistapace1991} and \cref{sec:parabolicpdes}}, i.e.,
\begin{equation}\label{eq:adjointproblem}
\partial_t u(t,x) = \varepsilon\Delta_{(1-t)} u(t,x)\,.
\end{equation}
The range of $(P^\varepsilon)^*$ is a subset of $H^1_0(M,g,\mass)$; see \cref{sec:parabolicpdes}.
Therefore, the left singular vectors of $P^\varepsilon$, or equivalently the eigenvectors of $(P^\varepsilon)^*P^\varepsilon$, are in $H^1_0(M,g,\mass)$.
Recall that the constant $C_3$ from \cref{lemma:h11} depends (i) on the uniform parabolicity bounds $C_1$ and $C_2$ from \cref{lemma:uniform_parabolicity},
and (ii) on bounds on $\partial_t(\Delta_t)$. All of these bounds equally apply to \cref{eq:adjointproblem}. Therefore, we conclude with \cref{lemma:h11} that
\begin{equation}\label{eq:adjointbound}
\abs{P^\ast u}_1 \leq C_3 \abs{u}_1\,,
\end{equation}
for $u \in H^1_0(M,g,\omega)$. Furthermore, the same estimate
\begin{align}
\abs{P_{t,1} u}_1 &\leq C_3 \abs{u}_1\label{eq:timetbound}\,,
\end{align}
applies to the solution operator (from time $t$ to time 1) of the Lagrangian advection-diffusion equation,
considered on the time interval $[t,1]$.
By construction,
\[
P_1^\varepsilon = P_{t,1}^\varepsilon P_{t}^\varepsilon
\]
for any $t \in (0,1)$.

\begin{lemma} \label{lemma:reverseineq}
There exists $C_4 > 0$, independent from $\varepsilon$, satisfying
\[
|v^\varepsilon|_1 \leq \max\limits_{t \in [0,1]} \abs*{v^\varepsilon(t)}_1 \leq C_4 \min\limits_{t \in [0,1]} \abs*{v^\varepsilon(t)}_{1} \leq C_4\abs*{v^\varepsilon}_{1}
\]
for sufficiently small $\varepsilon$ and the singular vector $v^{\varepsilon}$.
Recall that $v^\varepsilon(t) \coloneqq P^\varepsilon_t v^\varepsilon$.
\end{lemma}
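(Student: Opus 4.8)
The plan is to reduce the three-term chain to its only nontrivial ingredient and then split that into a forward and a reverse $H^1$-seminorm comparison. The two outer inequalities are immediate: since $0\in[0,1]$ we have $\abs*{v^\varepsilon}_1=\abs*{v^\varepsilon(0)}_1\le\max_{t}\abs*{v^\varepsilon(t)}_1$, and likewise $\min_t\abs*{v^\varepsilon(t)}_1\le\abs*{v^\varepsilon(0)}_1=\abs*{v^\varepsilon}_1$. So everything reduces to the middle estimate $\max_t\abs*{v^\varepsilon(t)}_1\le C_4\min_t\abs*{v^\varepsilon(t)}_1$, which I would prove by comparing every $\abs*{v^\varepsilon(t)}_1$ against the \emph{initial} seminorm $\abs*{v^\varepsilon(0)}_1$ from both sides: a forward bound $\abs*{v^\varepsilon(t)}_1\le C_3\abs*{v^\varepsilon(0)}_1$ for all $t$, and a reverse bound $\abs*{v^\varepsilon(0)}_1\le C\abs*{v^\varepsilon(t)}_1$ for all $t$. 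Chaining these gives $\abs*{v^\varepsilon(s)}_1\le C_3C\,\abs*{v^\varepsilon(t)}_1$ for all $s,t$, which is the claim with $C_4=C_3C$ after maximizing in $s$ and minimizing in $t$.

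The forward bound is the easy half: since $v^\varepsilon(t)=P^\varepsilon_t v^\varepsilon(0)$, \cref{lemma:h11} (equivalently \eqref{eq:timetbound}) gives $\abs*{v^\varepsilon(t)}_1\le C_3\abs*{v^\varepsilon(0)}_1$. Here I would stress that a single constant $C_3$ may be used simultaneously for the forward operators $P^\varepsilon_t$, for the restricted operators $P^\varepsilon_{t,1}$, and for all of their $L^2$-adjoints, because $C_3$ depends only on the uniform parabolicity constants $C_1,C_2$ of \cref{lemma:uniform_parabolicity} and on the uniform bound on $\partial_t(\Delta_t)$, all of which hold on all of $[0,1]$; the adjoint evolutions are the time-reversed advection-diffusion equations, to which the same estimates apply, exactly as in the derivation of \eqref{eq:adjointbound}.

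The reverse bound is the genuine obstacle, since the parabolic flow is irreversible and there is no backward-in-time $H^1$ estimate. The trick I would use is that $v^\varepsilon$ is the top singular vector, hence an eigenvector of $(P^\varepsilon)^*P^\varepsilon$ with eigenvalue $(\sigma^\varepsilon)^2$. Factoring $P^\varepsilon=P^\varepsilon_1=P^\varepsilon_{t,1}P^\varepsilon_t$ and $(P^\varepsilon)^*=(P^\varepsilon_t)^*(P^\varepsilon_{t,1})^*$, together with $P^\varepsilon_t v^\varepsilon=v^\varepsilon(t)$, yields
\begin{equation*}
v^\varepsilon(0)=(\sigma^\varepsilon)^{-2}(P^\varepsilon)^*P^\varepsilon v^\varepsilon=(\sigma^\varepsilon)^{-2}(P^\varepsilon_t)^*(P^\varepsilon_{t,1})^*P^\varepsilon_{t,1}\,v^\varepsilon(t)\,.
\end{equation*}
Applying the uniform $\abs*{\cdot}_1$-bound $C_3$ to each of the three operators $(P^\varepsilon_t)^*$, $(P^\varepsilon_{t,1})^*$ and $P^\varepsilon_{t,1}$ gives $\abs*{v^\varepsilon(0)}_1\le(\sigma^\varepsilon)^{-2}C_3^3\,\abs*{v^\varepsilon(t)}_1$. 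Since $\sigma^\varepsilon\to1$ as $\varepsilon\to0$ (established in Step 1 of the proof of \cref{thm:spectralconvergence}, independently of this lemma), we have $(\sigma^\varepsilon)^{-2}\le2$ for all sufficiently small $\varepsilon$, so the reverse bound holds with $C=2C_3^3$. Altogether $C_4=2C_3^4$ works for small $\varepsilon$. The only points needing care are the eigenvector identity for $v^\varepsilon$ and the fact that each adjoint and subinterval operator genuinely shares the single constant $C_3$; both are supplied by the remarks preceding the lemma.
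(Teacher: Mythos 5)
Your proposal is correct and follows essentially the same route as the paper: the outer inequalities are trivial, the forward bound comes from \cref{lemma:h11}, and the reverse bound exploits the eigenvector identity $(\sigma^\varepsilon)^2 v^\varepsilon = (P^\varepsilon)^*P^\varepsilon v^\varepsilon$ together with the factorization $P^\varepsilon = P^\varepsilon_{t,1}P^\varepsilon_t$, the uniform $H^1$ bounds for the subinterval and adjoint evolutions, and $\sigma^\varepsilon \to 1$. The only (harmless) difference is bookkeeping: the paper applies \cref{eq:adjointbound} to the full adjoint $(P^\varepsilon)^*$ acting on $v^\varepsilon(1)$, obtaining the constant $(\sigma^\varepsilon)^{-2}C_3^2$, whereas you factor the adjoint into $(P^\varepsilon_t)^*(P^\varepsilon_{t,1})^*$ and invoke the bound three times, landing at $(\sigma^\varepsilon)^{-2}C_3^3$.
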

\begin{proof}
The rightmost and leftmost inequalities are trivial. For the middle inequality, by \cref{lemma:h11} we have
\begin{align} \label{eq:followsfromh11}
\max\limits_{t \in [0,1]} \abs*{v^\varepsilon(t)}_1 \leq C_3 \abs*{v^\varepsilon}_1\,.
\end{align}
Thus it is enough to show
\begin{align}\label{eq:uniformboundssufficient}
\abs*{v^\varepsilon}_1 \leq C \abs*{v^\varepsilon(t)}_1
\end{align}
for all $t \in [0,1]$, with some $C > 0$ independent from $t$ or $\varepsilon$.
Since the square of singular values of $P^\varepsilon$ are eigenvalues of $(P^\varepsilon)^\ast P^\varepsilon$, we have
\begin{equation}\label{eq:reverseineqalmost}
(\sigma^\varepsilon)^2 v^\varepsilon(0) = (P^\varepsilon)^\ast P^\varepsilon_{1} v^\varepsilon(0) = (P^\varepsilon)^\ast v^\varepsilon(1)\,.
\end{equation}
Applying \cref{eq:timetbound} to $v^\varepsilon(1) = P^\varepsilon_{t,1} v^\varepsilon(t)$ yields $\abs*{v^\varepsilon(1)}_1 \leq C_3 \abs*{v^\varepsilon(t)}_1$.
\Cref{eq:adjointbound,eq:reverseineqalmost} yield that $(\sigma^\varepsilon)^2 \abs*{v^\varepsilon}_1 \leq C_3\abs*{v^\varepsilon(1)}_1$.
Combining these inequalities, we obtain $\abs*{v^\varepsilon(0)}_{1} \leq (\sigma^\varepsilon)^{-2} C_3^2 \abs*{v^\varepsilon(t)}_1$.
We know that (step 1 of \cref{thm:spectralconvergence}) $\sigma^\varepsilon \rightarrow 1$ for $\varepsilon\to 0$, and is thus bounded away from zero for sufficiently small $\varepsilon$. This proves \cref{eq:uniformboundssufficient}, and the claim is shown.
\end{proof}

\begin{lemma}[$H^1$ bound on singular vectors] \label{lemma:h1bound}
There exists a constant $C > 0$, independent of $\varepsilon$ and $t$,
for which $ |{v^\varepsilon(t)}|_1 \leq C$ holds for $t \in [0,1]$ and sufficiently small $\varepsilon$.
\end{lemma}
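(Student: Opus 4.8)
The plan is to control the \emph{space--time Dirichlet energy} $\int_0^1 \abs*{v^\varepsilon(t)}_1^2\,\dd t$ of the trajectory $t\mapsto v^\varepsilon(t)$, and then to upgrade the resulting bound on the time-\emph{average} of $\abs*{v^\varepsilon(t)}_1^2$ into a uniform bound for \emph{every} $t$ by invoking \cref{lemma:reverseineq}. The crucial point to keep in mind is that \cref{lemma:h1bound} feeds back into the proof of the Claim inside \cref{thm:spectralconvergence} (it supplies the finiteness of the constant $C$ there), so I must not use the conclusion of that theorem, on pain of circularity. I may, however, freely use those facts established in its \textbf{Step 1}, namely that $\sigma^\varepsilon\to1$ and the lower bound \cref{eq:squareeq}, together with the identity \cref{eq:importanteq}, all of which are derived independently of the Claim.

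First I would combine the identity \cref{eq:importanteq} with uniform parabolicity. Reading the identity as
\[
\frac{(\sigma^\varepsilon)^2-1}{\varepsilon} = 2\int_0^1 \Bra{v^\varepsilon(t),\Delta_t v^\varepsilon(t)}_0\,\dd t\,,
\]
and using $\Bra{u,\Delta_t u}_0 \leq -C_1\abs*{u}_1^2$ from \cref{lemma:uniform_parabolicity}, I obtain
\[
\int_0^1 \abs*{v^\varepsilon(t)}_1^2\,\dd t \;\leq\; \frac{1-(\sigma^\varepsilon)^2}{2C_1\,\varepsilon}\,.
\]
It then remains to bound the right-hand side uniformly in $\varepsilon$. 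By \textbf{Step 1} of \cref{thm:spectralconvergence} we have $\sigma^\varepsilon\to1$ and $\liminf_{\varepsilon\to0}\frac{(\sigma^\varepsilon)^2-1}{\varepsilon}\geq 2\overline\lambda$ (this is \cref{eq:squareeq}); equivalently $\limsup_{\varepsilon\to0}\frac{1-(\sigma^\varepsilon)^2}{\varepsilon}\leq -2\overline\lambda$. Hence there is $\varepsilon_0>0$ with $\frac{1-(\sigma^\varepsilon)^2}{\varepsilon}\leq -2\overline\lambda+1$ for all $\varepsilon<\varepsilon_0$, so the space--time energy is bounded by a constant $B\coloneqq (1-2\overline\lambda)/(2C_1)$ that is independent of $\varepsilon$.

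Finally, since the constant $\min_{t\in[0,1]}\abs*{v^\varepsilon(t)}_1^2$ is a lower bound for the integrand, it satisfies $\min_{t}\abs*{v^\varepsilon(t)}_1^2\leq \int_0^1\abs*{v^\varepsilon(t)}_1^2\,\dd t\leq B$, whence $\min_t\abs*{v^\varepsilon(t)}_1\leq\sqrt B$. Applying \cref{lemma:reverseineq} then gives $\max_{t}\abs*{v^\varepsilon(t)}_1\leq C_4\min_{t}\abs*{v^\varepsilon(t)}_1\leq C_4\sqrt B$ for sufficiently small $\varepsilon$, which is the desired uniform bound with $C\coloneqq C_4\sqrt B$. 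I expect the main obstacle to be organisational rather than technical: one must carefully avoid circularity by drawing only on \textbf{Step 1} and the identity \cref{eq:importanteq}, and one must resist the temptation to bound $\abs*{v^\varepsilon(t)}_1$ pointwise in $t$ directly, which the energy estimate alone does not yield. The decisive idea is to pass through the time-averaged energy and then let \cref{lemma:reverseineq} redistribute the bound uniformly over $t$.
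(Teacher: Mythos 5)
Your proof is correct and follows essentially the same route as the paper's: both hinge on the identity \cref{eq:importanteq}, the coercivity bound of \cref{lemma:uniform_parabolicity}, the estimate \cref{eq:squareeq} from Step~1 (so no circular use of the Claim), and \cref{lemma:reverseineq}. The only cosmetic difference is that you bound $\min_{t}\abs*{v^\varepsilon(t)}_1^2$ by the time-averaged Dirichlet energy and then apply the middle inequality of \cref{lemma:reverseineq} to spread the bound over all $t$, whereas the paper bounds the integral from below by the minimum of the integrand, deduces the bound at $t=0$, and extends to all $t$ via \cref{lemma:h11}.
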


\begin{proof}
With \cref{eq:importanteq,lemma:uniform_parabolicity,lemma:reverseineq} we obtain for any $t \in (0,1]$ and sufficiently small $\varepsilon$ that:
\begin{align*}
\frac{(1 - (\sigma^\varepsilon)^2)}{2 \varepsilon} &= -\int_0^1 \Bra{v^\varepsilon(t), \Delta_t v^\varepsilon(t)}_{0}\,\dd t  \\
&\geq \min\limits_{t \in [0,1]} - \Bra{v^\varepsilon(t), \Delta_{t} v^\varepsilon(t)}_{0}\\
&\geq C_1 \min\limits_{t \in [0,1]} |v^\varepsilon(t)|^2_1 \\
&\geq C_1 C_4^{-1} \abs{v^\varepsilon(0)}_1^2\,.
\end{align*}
In \cref{eq:squareeq} we have already shown that the  limit superior of the left hand side is less than or equal to $-\overline{\lambda}$, and,
therefore, it may be bounded from above by, say, $-2\overline \lambda$ for sufficiently small $\varepsilon$.
This shows that $\abs{v^\varepsilon}_1^2 \leq -2 \overline \lambda C_4 C_1^{-1}$, proving the claim for $t=0$. The case $t \neq 0$ is now a consequence of \cref{lemma:h11}.
\end{proof}

\section{Parabolic PDEs on compact manifolds with boundary} \label{sec:parabolicpdes}

We briefly collect some properties of second-order parabolic PDEs on compact and orientable smooth
Riemannian manifolds with (potentially empty) $C^2$ boundary. These properties are well known when the domain
is an open subset of $\R^n$ \cite{Evans2010,Renardy2004} and the straightforward extension to
compact manifolds seems to be folklore knowledge , though rarely explicitly treated; see \cite{Amann2016}.
We describe below some properties of the the Galerkin-approach described in \cite{Evans2010}\cite[Chapter 11.1]{Renardy2004} with straightforward modification to the time-dependent mass-preserving setting 
on a compact manifold; the reasoning below is included only in order to (a) demonstrate that well-known results on $\mathbb R^n$ indeed extend to compact manifolds in a straightforward way because we could not find a reference and (b) collect some technical results arising directly in the standard Galerkin approach that we require elsewhere.

Let $\mass$ be a smooth, nonvanishing volume-form on $M$. For convenience, we will use a metric
$g$ such that $\dd g = \mass$. The metric may be constructed by any metric on $M$ after suitable rescaling.
We need this metric only for defining a norm on $H^1(M,g)$, given by
\begin{align*}
\norm{u}_{H^1(M,g)}^2 \coloneqq \int_M g(\grad u, \grad u) \mass + \int_M |u|^2 \mass\,,
\end{align*}
where $\grad u$ is interpreted in a suitably weak sense. Since $M$ is compact, the specific choice of $g$ will not affect the topology of $H^1(M,g)$.
The space $H_0^1(M,g)$ is defined as the completion of $\cicm$ w.r.t.~$\norm{\cdot}_{H^1(M,g)}$ \cite{Grigoryan2009,Jost2011}.

We will describe the parabolic PDE theory needed for the equation
\begin{align}\label{parabolicprob}
\partial_t u &= \divergence_{\mass}( D(t) \dd  u)\,,
\end{align}
with $D\colon [0,1] \times T^*M \rightarrow TM$ a smooth---including at the boundary---family of nonvanishing
bundle morphisms that are symmetric in the sense that $D(t,u)(v) = D(t,v)(u)$ for $t \in [0,1]$ and all vector fields $u$ and $v$.
Let $L(t)v \coloneqq \divergence_{\mass}(D(t)\dd u)$. The tensor field $D$ is bounded---due to its smoothness and compactness of $M$---and
nonvanishing, hence the operator $\partial_t - L$ is \emph{uniformly parabolic}, i.e., there exists $\alpha > 0$ such that for any $v \in H^1_0(M,g)$
\begin{align}
\alpha^{-1}\norm{\grad_g v}^2_{L^2(M,\mass)} \leq -\Bra{v,L(t) v}_{L^2(M,\mass)} \leq \alpha \norm{\grad_g v}^2_{L^2(M,\mass)}\,.
\end{align}

For what follows, we require the well-known theory of vector-valued So\-bo\-lev spaces, and our notation essentially follows \cite{Renardy2004};
see also \cite[Appendix A]{Brezis1973} for proofs of fundamental results.
For a Hilbert space $X$, we write $X^*$ for its dual, and $H^{-1}(M,g) \coloneqq H^1_0(M,g)^*$.

As in \cite{Renardy2004}, to each $t \in [0,1]$ we associate an operator $L(t)\colon H^1_0(\mass) \mapsto H^{-1}(M,g)$,
defined by
\[
(L(t)u)v = \Bra{\dd u, D(t) \dd v}_{L^2(M,\mass)} = \int_M \dd u(D(t) \dd v) \,\mass\,.
\]
The space $L^2(M,\mass)$ embeds continuously into $H^{-1}$ by the identification of a function $f \in L^2(M,\omega)$ with the functional $\Bra{f,\cdot}_{L^2(M,\mass)}$.
By a slight abuse of notation, for $f \in H^{-1}(M,g)$ and $g \in H^1_0(M,g)$ we will write $\Bra{f,g}_{L^2(M,\mass)} \coloneqq f(g)$,
even if $f$ is not contained in the image of the embedding.

\begin{lemma}\label{lemma:galerkin}
\Cref{parabolicprob} has a unique weak solution
\[
u \in C([0,1];L^2(M,\mass)) \cap L^2((0,1); H^1_0(M,g)),
\]
given an initial value $u(0,\cdot) \in L^2(M,\mass)$. Moreover, the function $t \mapsto \norm{u(t,\cdot)}^2_{L^2(U)}$ is absolutely continuous, with
\begin{align}
\frac{d}{dt}\norm{u(t)}^2_{L^2(M,\mass)} &= 2\Bra{L(t)u(t), u(t) }_{L^2(M,\mass)}
\end{align}
for almost all $t$, where the right hand side must be interpreted in a weak sense.
\end{lemma}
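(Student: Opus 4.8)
The plan is to run the classical Galerkin construction of \cite[Chapter~7.1]{Evans2010} (cf.~also \cite[Chapter~11.1]{Renardy2004}), checking at each step that replacing a Euclidean domain by a compact manifold with smooth boundary, and a fixed operator by the time-dependent, mass-preserving family $L(t)$, costs nothing beyond the uniform parabolicity already recorded above. First I would fix a basis $(w_k)_k$ of $L^2(M,\mass)$ that is orthonormal in $L^2(M,\mass)$ and orthogonal in $H^1_0(M,g)$; the Dirichlet eigenfunctions of a fixed elliptic operator such as $\Delta_g$ provide such a basis on the compact manifold with smooth boundary by the spectral theorem. Seeking approximants $u_m(t)=\sum_{k=1}^m d_k^m(t)\,w_k$ determined by $\Bra{\partial_t u_m,w_k}_{L^2(M,\mass)}=\Bra{L(t)u_m,w_k}_{L^2(M,\mass)}$ for $k\le m$ turns the problem into a linear ODE system for the coefficients $d_k^m$, whose matrix entries $\Bra{L(t)w_j,w_k}_{L^2(M,\mass)}$ are smooth in $t$ by smoothness of $D$; Picard--Lindelöf then yields $u_m$ on all of $[0,1]$.

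Next I would derive the a priori energy estimates. Testing the Galerkin equation against $u_m$ and invoking the uniform parabolicity inequality stated above (coercivity and boundedness with the single constant $\alpha$, uniform in $t\in[0,1]$), a Grönwall argument produces a bound on $u_m$ in $L^\infty\bigl((0,1);L^2(M,\mass)\bigr)\cap L^2\bigl((0,1);H^1_0(M,g)\bigr)$ that is independent of $m$, together with a companion bound on $\partial_t u_m$ in $L^2\bigl((0,1);H^{-1}(M,g)\bigr)$ obtained by testing against arbitrary $v\in H^1_0(M,g)$ and using boundedness of $L(t)$. Weak and weak-$*$ compactness then extract a subsequence converging to some $u$ in these spaces, and a standard passage to the limit—testing against finite linear combinations of the $w_k$, then using their density in $H^1_0(M,g)$—shows that $u$ is a weak solution attaining $u(0,\cdot)=u_0$.

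Uniqueness follows from linearity: the difference of two solutions with identical initial datum solves the homogeneous problem, and the energy identity forces its $L^2(M,\mass)$-norm to vanish. Finally, the regularity $u\in C\bigl([0,1];L^2(M,\mass)\bigr)$, the absolute continuity of $t\mapsto\norm{u(t,\cdot)}_{L^2(M,\mass)}^2$, and the energy identity $\tfrac{d}{dt}\norm{u(t)}_{L^2(M,\mass)}^2=2\Bra{L(t)u(t),u(t)}_{L^2(M,\mass)}$ are all immediate consequences of the abstract lemma for functions with $u\in L^2\bigl((0,1);H^1_0(M,g)\bigr)$ and $\partial_t u\in L^2\bigl((0,1);H^{-1}(M,g)\bigr)$, namely \cite[Sect.~5.9, Thm.~3]{Evans2010}, once $\partial_t u=L(t)u$ is read as an identity in $H^{-1}(M,g)$.

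The genuinely non-routine part is confirming that the manifold-with-boundary geometry and the $t$-dependence do not spoil the Euclidean argument, rather than any single deep estimate. Concretely, the points needing verification are (i) the existence of a basis orthonormal in $L^2(M,\mass)$ and orthogonal in $H^1_0(M,g)$, which rests on the spectral theorem for the Dirichlet Laplacian on the compact manifold; (ii) the \emph{uniformity} in $t\in[0,1]$ of the coercivity and boundedness constants, which is exactly what uniform parabolicity supplies; and (iii) the continuous embedding $L^2(M,\mass)\hookrightarrow H^{-1}(M,g)$ together with the pairing convention $\Bra{f,v}_{L^2(M,\mass)}\coloneqq f(v)$, both of which are already fixed above. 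None of these is hard, but each must be checked to license transporting the classical proof verbatim.
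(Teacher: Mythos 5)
Your proposal is correct and follows essentially the same route as the paper, which likewise invokes the Galerkin construction of \cite[Sect.~7.1, Thms.~3 \& 4]{Evans2010} (cf.~\cite[Chapter~11.1]{Renardy2004}) for existence and uniqueness on the compact manifold, and \cite[Sect.~5.9, Thm.~3]{Evans2010} for the continuity in time, the absolute continuity of $t\mapsto\norm{u(t,\cdot)}_{L^2(M,\mass)}^2$, and the energy identity. Your explicit verification of the manifold-specific points (the eigenbasis of the Dirichlet Laplacian, the $t$-uniform coercivity, and the embedding $L^2(M,\mass)\hookrightarrow H^{-1}(M,g)$) fleshes out precisely what the paper's one-line proof leaves implicit.
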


\begin{proof}
The $L^2$-Galerkin approach described in \cite[Sect.~7.1, Thms.~3 \& 4]{Evans2010} yields existence and uniqueness for the compact manifold case 
just like for $M \subset \R^n$ compact. 
Theorem 3 in \cite[Sect.~5.9]{Evans2010} proves the remaining claims.
\end{proof}

These arguments show that for $t \in [0,1]$, the time-$t$ solution operator to \cref{parabolicprob} is well-defined when viewed as an operator $P_t \colon L^2 \rightarrow L^2$.
Arguments as in \cref{lemma:l2contr} establish its continuity.

Let the domain of $L(t)$ be the collection of $f \in H^1_0(M,g)\subset L^2(M,\mass)$ satisfying $L(t)f \in L^2(M,\mass)$. As a consequence of elliptic regularity theory and the fact that we are working with homogeneous Dirichlet boundary (see \cite[Chapters 6.3 \& 7.4]{Evans2010}), 
this function space does not depend on $t$.
By arguments as in \cite[Chapter 7]{Pazy1983}, one sees that for $t \in (0,1]$, the image of the time-$t$ solution operator $P_t$ is in the domain of $L(t)$.
Hence, the image of $P_t$ is in $H^1_0(M,g)$ for all $t \in (0,1]$. Thus, the operator $P_t\colon L^2(M,\mass) \to H^1_0(M,g)$ is well-defined, and by the
closed graph theorem it is continuous. By the Rellich-Kondrachev theorem, $P_t$ is therefore compact when viewed as an operator from $L^2(M,\mass)$ to itself.

\begin{lemma}\label{lemma:regularity1}
Provided that the initial value $u_0$ is in the domain of $L(0)$, the solution from \cref{lemma:galerkin} is sufficiently regular such that
\begin{enumerate}[(i)]
	\item $u \in H^1\left((0,1); H^1_0(M,g)\right)$,
	\item $L(t)u \in H^1\left((0,1);H^{-1}(M,g)\right)$, and
	\item $L(t)u \in C\left([0,1];L^2(M,\mass)\right)$.
\end{enumerate}
The function $\Bra{u,L(t)u}_{L^2(M,\mass)}$ is absolutely continuous with 
\begin{multline}\label{eq:weakdiff2}
\frac{d}{dt}\Bra{u(t), L(t) u(t) }_{L^2(M,\mass)} = \\2 \Bra{L(t)u(t),L(t)u(t)}_{L^2(M,\mass)} + \Bra{L'(t)u(t),u(t)}_{L^2(M,\mass)}
\end{multline}
for almost all $t \in [0,1]$.
\end{lemma}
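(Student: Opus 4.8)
The plan is to adapt the standard improved-regularity argument for parabolic equations (\cite[Sect.~7.1, Thm.~5]{Evans2010}) to the present time-dependent, compact-manifold setting. The key observation is that, since \eqref{parabolicprob} reads $\partial_t u = L(t)u$, the quantity $L(t)u$ appearing in (ii) and (iii) is nothing but $\partial_t u$, so all three assertions are really statements about the time-regularity of $\partial_t u$. All inner products and norms below are in $L^2(M,\mass)$ unless indicated, and I introduce the symmetric, continuous, coercive bilinear form $B_t[u,w] := \langle \dd u, D(t)\dd w\rangle$, so that the weak form of \eqref{parabolicprob} is $\langle \partial_t u, w\rangle + B_t[u,w] = 0$ for all $w \in H^1_0(M,g)$, and $\langle L'(t)u, w\rangle = -B'_t[u,w]$, where $B'_t$ is the form associated to $\partial_t D$. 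The crux of the whole lemma is a single energy estimate showing $\partial_t u \in L^2((0,1);H^1_0(M,g)) \cap L^\infty((0,1);L^2(M,\mass))$; everything else then follows by differentiating the equation and invoking standard embeddings.

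First I would establish that energy estimate by differentiating the Galerkin scheme of \cref{lemma:galerkin} in time. The Galerkin coefficients solve a linear ODE system whose coefficients depend smoothly on $t$ (because $D$ does), hence are smooth in $t$ and may be differentiated; setting $\tilde u_m := \partial_t u_m$ and differentiating the Galerkin identity produces
\[
\langle \partial_t \tilde u_m, w_k\rangle + B_t[\tilde u_m, w_k] = -B'_t[u_m, w_k], \qquad k \le m,
\]
the exact analogue of the Euclidean computation with the extra term $-B'_t[u_m,w_k]$ coming from the time-dependence of $L$. Testing with $\tilde u_m$ gives $\tfrac12 \tfrac{\dd}{\dd t}\|\tilde u_m\|^2 + B_t[\tilde u_m,\tilde u_m] = -B'_t[u_m,\tilde u_m]$; coercivity of $B_t$ together with boundedness of $B'_t$ (smoothness of $\partial_t D$ and compactness of $M$) and Young's inequality absorb the right-hand side, and Grönwall's lemma, combined with the $L^2((0,1);H^1_0)$ bound on $u_m$ already supplied by \cref{lemma:galerkin}, yields a bound on $\tilde u_m$ in $L^\infty((0,1);L^2)\cap L^2((0,1);H^1_0)$ uniform in $m$. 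The initial datum $\tilde u_m(0)$ is the Galerkin projection of $L(0)u_0$, which is bounded in $L^2(M,\mass)$ precisely because $u_0 \in \operatorname{dom}(L(0))$; this is where the compatibility hypothesis enters. Passing to weak and weak-$*$ limits then gives $\partial_t u \in L^2((0,1);H^1_0) \cap L^\infty((0,1);L^2)$, which together with $u \in L^2((0,1);H^1_0)$ from \cref{lemma:galerkin} is exactly claim (i).

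With (i) in hand the remaining assertions are quick. For (ii), since $L(t)u = \partial_t u$, differentiating the equation in the sense of $H^{-1}(M,g)$-valued distributions gives $\partial_t(L(t)u) = L(t)\partial_t u + L'(t)u$; the first term lies in $L^2((0,1);H^{-1})$ because $\partial_t u \in L^2(H^1_0)$ and $L(t)\colon H^1_0 \to H^{-1}$ is bounded, and the second likewise because $u \in L^2(H^1_0)$ and $L'(t)\colon H^1_0 \to H^{-1}$ is bounded, so $L(t)u \in H^1((0,1);H^{-1})$. Claim (iii) follows by applying the continuity embedding \cite[Sect.~5.9, Thm.~3]{Evans2010} to $w := L(t)u = \partial_t u$, which lies in $L^2(H^1_0)$ by (i) and has weak derivative in $L^2(H^{-1})$ by (ii), hence $w \in C([0,1];L^2)$. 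Finally, \eqref{eq:weakdiff2} is obtained by writing $\langle u, L(t)u\rangle = -B_t[u,u]$, differentiating using the symmetry of $B_t$ and the smoothness of $D$ in $t$ to get $-B'_t[u,u] - 2B_t[\partial_t u, u]$, and then using the weak equation tested with $\partial_t u$ (namely $B_t[u,\partial_t u] = -\|\partial_t u\|^2$) to rewrite the second term as $2\langle L(t)u, L(t)u\rangle$; absolute continuity of $t \mapsto \langle u, L(t)u\rangle$ is inherited from the Bochner--Sobolev regularity of $u$ and $L(t)u$.

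The main obstacle, relative to the autonomous Euclidean template, is twofold, and both difficulties are localized in the energy-estimate step. The genuinely new feature is the term $B'_t$ produced by the time-dependence of $L$: one must verify it is uniformly bounded and control it in the Grönwall argument without spoiling coercivity. The more delicate point is the uniform $L^2$-bound on the initial time-derivative $\tilde u_m(0)$, which is exactly where the hypothesis $u_0 \in \operatorname{dom}(L(0))$ must be exploited and where the interaction between the Galerkin basis and the operator domain needs care. By contrast, the passage from $\R^n$ to a compact manifold is routine, as is the deduction of (ii), (iii) and the product rule once (i) is available.
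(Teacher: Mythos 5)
Your proposal is correct and takes essentially the same route as the paper, whose entire proof is ``Proceed as in \cite[Sect.~11.1.4]{Renardy2004}, for the last statement a result like \cite[Cor.~A.4]{Brezis1973} is required'' --- i.e.\ precisely the two ingredients you spell out: the Galerkin scheme of \cref{lemma:galerkin} differentiated in time (with the extra $B'_t$ term from the $t$-dependence of $D$ and the bound on $\tilde u_m(0)$ using $u_0\in\operatorname{dom}(L(0))$), followed by a Br\'ezis-type chain rule for the absolute continuity of $\Bra{u,L(t)u}_{L^2(M,\mass)}$. The one step worth making explicit is that your identification of $\tilde u_m(0)$ with the Galerkin projection of $L(0)u_0$ holds only for an adapted basis (e.g.\ eigenfunctions of the fixed self-adjoint elliptic operator $L(0)$), which is the standard device in \cite{Renardy2004,Evans2010} and exactly the delicate point you correctly flag.
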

\begin{proof}
Proceed as in \cite[Sect.~11.1.4]{Renardy2004}, for the last statement 
a result like \cite[Cor.~A.4]{Brezis1973} is required.
\end{proof}

\begin{lemma}
If $u_0 \in \cicm$, then the solution $u$ from \cref{lemma:galerkin} is in $C^\infty([0,1] \times M)$.
\end{lemma}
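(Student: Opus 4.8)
The plan is to exploit that $u_0 \in \cicm$ vanishes, together with all its derivatives, in a neighborhood of $\partial M$; consequently all boundary compatibility conditions hold at every order, and the only genuine work is to run a standard parabolic bootstrap that gains regularity both in time and in space, uniformly up to $t=0$ and up to $\partial M$. Since smoothness is a local property, I would cover $M$ by finitely many coordinate charts---interior charts, and boundary charts in which $\partial M$ is straightened to a piece of a hyperplane---so that in each chart \cref{parabolicprob} becomes a uniformly parabolic equation with smooth, time-dependent coefficients on an open subset of $\R^n$ (respectively of a half-space, with homogeneous Dirichlet data), to which the Euclidean parabolic regularity theory applies directly.

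To gain regularity in time, I set $w_k \coloneqq \partial_t^k u$ and differentiate the equation $k$ times in $t$; using that $D$ is smooth in $t$, one finds that $w_k$ solves an inhomogeneous parabolic equation $\partial_t w_k = L(t) w_k + f_k$ whose forcing $f_k = \sum_{j<k}\binom{k}{j}\divergence_\mass\bigl((\partial_t^{k-j}D(t))\,\dd w_j\bigr)$ is built from the lower-order $w_j$ and smooth coefficients. Its initial value $w_k(0)$ is obtained by applying a fixed differential operator (assembled from $L(0)$ and the time-derivatives of $L$ at $t=0$) to $u_0$, hence again lies in $\cicm$. Because every such initial datum vanishes near $\partial M$, the compatibility conditions at all orders are automatically satisfied, so the energy estimates behind \cref{lemma:galerkin,lemma:regularity1} and their higher-order analogues yield $w_k \in C([0,1];L^2(M,\mass)) \cap L^2((0,1);H^1_0(M,g))$ for every $k$, with bounds valid up to $t=0$.

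To gain regularity in space, I would read the equation at fixed time as the elliptic identity $L(t)u(t,\cdot) = w_1(t,\cdot)$. Since $L(t)$ is uniformly elliptic with smooth coefficients, $\partial M$ is smooth, and we impose homogeneous Dirichlet conditions, elliptic regularity (in interior charts, and in straightened boundary charts via the half-space estimates) bootstraps $u(t,\cdot)$ to arbitrary spatial Sobolev regularity from the control on $w_1(t,\cdot)$, uniformly in $t$. Interleaving the time and space gains shows $u \in H^s((0,1)\times M)$ for every $s$, whence $u \in C^\infty([0,1]\times M)$ by the Sobolev embedding theorem. I expect the main obstacle to be precisely the simultaneous control of regularity up to $t=0$ \emph{and} up to the boundary $\partial M$: this is exactly where the hypothesis $u_0\in\cicm$ is essential, as it trivializes the boundary compatibility hierarchy that would otherwise obstruct smoothness at the initial time.
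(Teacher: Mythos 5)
Your proposal is correct and is essentially the paper's own proof written out in detail: the paper merely observes that $u_0$ lies in the domain of all powers of $L(t)$ (your point that $\partial_t^k u(0,\cdot)\in\cicm$, so the compatibility hierarchy trivializes) and then cites the iteration of the construction behind \cref{lemma:regularity1} together with elliptic regularity and Sobolev embedding, i.e.\ exactly your scheme of differentiating in time, reading the equation as an elliptic identity at fixed $t$, and concluding via Sobolev embedding, cf.~\cite[Sect.~7.1, Thm.~7]{Evans2010}. The chart-based localization you describe is the same folklore reduction to the Euclidean half-space theory that the paper invokes implicitly at the start of \cref{sec:parabolicpdes}.
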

\begin{proof}
Certainly $u_0$ is in the domain of all powers of $L(t)$. Iterating the construction of \cite[Sect.~11.1.4]{Renardy2004} 
together with Sobolev embedding and elliptic-regularity results yields the claim. See also \cite[Sect.~7.1, Thm.~7]{Evans2010}
for a proof of the nonautonomous case on open subsets of $\R^n$.
\end{proof}

We conclude with a well-known property of smooth solutions to parabolic equations.

\begin{lemma}[{Weak maximum principle on manifolds; \cite[Thm.~A.3.1]{Jost2011} or \cite[Sect.~7.1, Thm.~8]{Evans2010}}] Let $u \in C^{1,2}([0,1] \times M) \cap C([0,1] \times \overline M)$. If $\mathcal L^\varepsilon u \leq 0$ on $[0,1] \times \mathrm{int}(M)$, then for the ``parabolic boundary'' $B \coloneqq [0,1] \times \partial M\cup \{0\} \times M$ one has
\label{lemma:wmp}
\begin{align}
\max_{[0,1] \times M} u &= \max_B u\,.
\end{align}
\end{lemma}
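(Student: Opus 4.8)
The plan is to reduce the non-strict inequality to the strict one by a standard additive perturbation, and to treat the strict case by a pointwise contradiction argument at a space-time maximum. First I would record the two structural facts about $\mathcal L^\varepsilon = \partial_t - \varepsilon\Delta_t$ that drive everything: the spatial part $\Delta_t = \divergence(g_t^{-1}\dd u)$ is in divergence form with positive-definite principal symbol $g_t^{-1}$, i.e.\ it is elliptic, and it annihilates spatially constant functions. Both are immediate from the definition of $\Delta_t$, and together with $\varepsilon>0$ they are all that the sign analysis below will use.

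For the strict case, I would assume $\mathcal L^\varepsilon w < 0$ on $[0,1]\times\mathrm{int}(M)$ and show by contradiction that the maximum of $w$ over the compact set $[0,1]\times M$ cannot be attained at any point $(x_0,t_0)$ with $x_0\in\mathrm{int}(M)$ and $t_0\in(0,1]$. At such a point, working in local coordinates, the spatial differential $\dd w$ vanishes and the coordinate Hessian of $w(\cdot,t_0)$ is negative semidefinite. Because $\Delta_{t_0}$ is in divergence form, its lower-order terms carry all coordinate derivatives of the coefficients multiplied by $\dd w$, hence vanish at $(x_0,t_0)$; thus $\Delta_{t_0}w$ reduces there to the trace of the positive-definite inverse metric against the negative-semidefinite Hessian, which is $\leq 0$. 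Simultaneously $\partial_t w(x_0,t_0)\geq 0$: for $t_0\in(0,1)$ this is a critical point in time, while for $t_0=1$ the function cannot be increasing as $t$ leaves the interval. Combining, $\mathcal L^\varepsilon w(x_0,t_0) = \partial_t w - \varepsilon\Delta_{t_0}w \geq 0$, contradicting strict negativity. Hence the maximum lies on $B$.

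To remove strictness, I would apply the above to $w = u - \delta t$ for $\delta>0$. Since $\delta t$ is spatially constant we have $\mathcal L^\varepsilon w = \mathcal L^\varepsilon u - \delta \leq -\delta < 0$, so the strict case yields $\max_{[0,1]\times M} w = \max_B w$. Using $u-\delta \leq w \leq u$ on $[0,1]\times M$, and $w\leq u$ on $B$, one gets $\max_{[0,1]\times M} u \leq \max_B u + \delta$; letting $\delta\to 0$ gives $\max_{[0,1]\times M} u \leq \max_B u$, while the reverse inequality is trivial because $B\subset[0,1]\times M$.

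The main obstacle is one of care rather than difficulty: one must verify that at a spatial critical point the divergence-form operator $\Delta_t$ genuinely reduces to its elliptic principal part, so that the sign of $\Delta_t w$ is controlled by the Hessian alone, and one must correctly accommodate the endpoint $t_0=1$, where the maximum may legitimately sit on the top time-slice and only the one-sided bound $\partial_t w\geq 0$ is available. Everything else transcribes the Euclidean argument of \cite[Sect.~7.1]{Evans2010} to the manifold setting, the computation being local and therefore insensitive to the global geometry.
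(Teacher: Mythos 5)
Your proof is correct. Note, however, that the paper does not prove this lemma at all: it is stated as a known result with pointers to \cite[Thm.~A.3.1]{Jost2011} and \cite[Sect.~7.1, Thm.~8]{Evans2010}, so there is no internal argument to compare against; what you have done is supply, correctly, the standard textbook proof that those citations encapsulate, transcribed to the setting actually needed here (compact manifold, time-dependent divergence-form operator $\Delta_t = \divergence(g_t^{-1}\,\dd u)$ with $\omega$-weighted divergence). The three points where such a transcription could go wrong are exactly the ones you address: (i) at a spatial critical point the first-order terms produced by expanding the weighted divergence (derivatives of $g_t^{-1}$ and of the density of $\omega$, each multiplied by $\dd w$) vanish, so $\Delta_{t_0}w$ reduces to the trace of the positive-definite $g_{t_0}^{-1}$ against the negative-semidefinite coordinate Hessian, a coordinate-chart computation whose conclusion $\Delta_{t_0}w \leq 0$ is chart-independent since $\Delta_{t_0}w$ is a function; (ii) the top time-slice $t_0 = 1$ is not in the parabolic boundary $B$, and there only the one-sided bound $\partial_t w \geq 0$ holds, which still suffices since the hypothesis $\mathcal L^\varepsilon u \leq 0$ is assumed on all of $[0,1]\times\mathrm{int}(M)$ and $u \in C^{1,2}$ up to $t=1$; (iii) the perturbation $w = u - \delta t$ has $\mathcal L^\varepsilon w \leq -\delta < 0$ precisely because $\Delta_t$ annihilates spatially constant functions, and the bookkeeping $\max_{[0,1]\times M} u \leq \max_B u + \delta$ followed by $\delta \to 0$ is sound (the reverse inequality being trivial as $B \subset [0,1]\times M$). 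The regularity hypothesis in the statement provides exactly what your pointwise argument consumes, and compactness of $[0,1]\times M$ guarantees the maximum is attained, so there is no gap.
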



\end{document}